\documentclass[11pt,reqno]{amsart}

\usepackage[T1]{fontenc}
\usepackage[utf8]{inputenc}
\usepackage{newtxtext}
\usepackage{amsmath,amssymb,amsfonts,mathtools}
\usepackage{newtxmath}
\usepackage[letterpaper,centering,textwidth=5.95in,textheight=8.65in]{geometry}
\usepackage{booktabs,array,tabularx,capt-of}
\usepackage{tikz}
\usepackage{float}
\usetikzlibrary{calc,positioning,decorations.pathreplacing}
\usepackage{enumitem}
\usepackage{microtype}
\usepackage{xcolor}
\usepackage{hyperref}
\hypersetup{hidelinks}

\hypersetup{
  pdftitle={A Proof of Sundaram's Bounded-Interval Higher Lie Positivity Conjecture},
  pdfauthor={},
  pdfsubject={Schur positivity for bounded partial sums of higher Lie characteristics},
  pdfkeywords={Schur positivity, symmetric functions, symmetric group characters, higher Lie modules, Foulkes characters, cyclic induction}
}

\newcommand{\F}{F}
\newcommand{\Lie}{\operatorname{Lie}}
\newcommand{\ch}{\operatorname{ch}}
\newcommand{\1}{\mathbf 1}
\newcommand{\eps}{\varepsilon}
\newcommand{\wpr}{\omega_{\mathrm{pr}}}

\newtheorem{theorem}{Theorem}[section]
\newtheorem{lemma}[theorem]{Lemma}
\newtheorem{proposition}[theorem]{Proposition}
\newtheorem{corollary}[theorem]{Corollary}
\theoremstyle{definition}
\newtheorem{definition}[theorem]{Definition}
\newtheorem{remark}[theorem]{Remark}

\title[BOUNDED-INTERVAL HIGHER LIE POSITIVITY]
{A PROOF OF SUNDARAM'S BOUNDED-INTERVAL HIGHER LIE POSITIVITY CONJECTURE}

\subjclass[2020]{Primary 05E10; Secondary 20C30, 05E05}
\keywords{Schur positivity, symmetric functions, symmetric group characters, higher Lie modules, Foulkes characters, cyclic induction}

\author{Kesen Hou}

\date{July 14th, 2026}

\begin{document}

\begin{abstract}
For positive integers $n$ and $M$, Sundaram defined
\[
 F_{n,M}=\sum_{\substack{d\mid n\\d\le M}}\Lie_{n/d}[p_d]
\]
and conjectured that every coefficient in its Schur expansion is a nonnegative integer. We prove the conjecture. The power-sum expansion isolates the contribution of the identity class, and the proof divides Young diagrams at the intrinsic majority threshold. If the first row or first column contains more than half of the boxes, the Bernstein creation formula separates each rectangular-cycle character value into a hook constant and a remainder supported only on short cycles. The restrictions of the trivial and sign characters to the cyclic subgroup evaluate the constants exactly, whereas a uniform binomial contraction controls every distance from the boundary at once. If neither a row nor a column has a majority, Swanson's opposite-hook estimate gives a Catalan-scale lower bound for the dimension. The Fomin--Lulov rectangular-character estimate, together with uniform bounds for the truncated M\"obius weights, then shows that the identity class dominates. Sundaram's plethystic identity consequently implies that $\prod_{r=1}^M(1-p_r)^{-1}$ is Schur-positive for every fixed $M$. We also record a degree-$12$ counterexample to a separate congruence-class product conjecture attributed to Richard Stanley.
\end{abstract}

\maketitle

\section{Introduction}

The character table of the symmetric group $S_n$ is naturally indexed by partitions in two different ways. A partition $\lambda\vdash n$ is a weakly decreasing sequence of positive integers with sum $n$; its Young diagram has $\lambda_i$ boxes in row $i$. The irreducible representations of $S_n$ are indexed by such partitions $\lambda$, while the conjugacy classes are indexed by partitions $\rho$ recording the cycle lengths of a permutation. We write $\chi^\lambda(\rho)$ for the value of the irreducible character indexed by $\lambda$ on the class of cycle type $\rho$.

A basic positivity question is the following. Given a natural collection $\mathcal C$ of cycle types, when is
\[
 \sum_{\rho\in\mathcal C}\chi^\lambda(\rho)
\]
a nonnegative integer for every $\lambda\vdash n$? The individual character values may have either sign, so this is generally a question about cancellation across the chosen classes.

Symmetric functions package all these character sums into a single object. In degree $n$, the power-sum functions $p_\rho$ are indexed by conjugacy classes and the Schur functions $s_\lambda$ are indexed by irreducible representations. Every homogeneous symmetric function $g$ of degree $n$ has a unique expansion
\[
 g=\sum_{\lambda\vdash n}b_\lambda s_\lambda,
\]
called its \emph{Schur expansion}. The function $g$ is \emph{Schur-positive} when all $b_\lambda$ are nonnegative integers. The Frobenius characteristic map gives the fundamental change-of-basis formula
\begin{equation}
 p_\rho=\sum_{\lambda\vdash n}\chi^\lambda(\rho)s_\lambda
 \qquad(\rho\vdash n).
 \label{eq:power-schur}
\end{equation}
Consequently,
\[
 \left[s_\lambda\right]\sum_{\rho\in\mathcal C}p_\rho
 =\sum_{\rho\in\mathcal C}\chi^\lambda(\rho).
\]
Moreover, a homogeneous symmetric function is Schur-positive precisely when it is the Frobenius characteristic of an actual finite-dimensional $S_n$-representation; the coefficient of $s_\lambda$ is then the multiplicity of the irreducible representation indexed by $\lambda$. Thus Schur positivity is the symmetric-function form of the assertion that the relevant character sums are the multiplicities of a genuine representation. We refer to \cite{Macdonald,Sagan,Stanley} for the standard background.

Fix $M\ge1$ and consider
\begin{equation}
 P_M:=\prod_{r=1}^{M}(1-p_r)^{-1}
     =\sum_{\rho:\,\rho_1\le M}p_\rho.
 \label{eq:PM}
\end{equation}
Indeed, expanding $(1-p_r)^{-1}=1+p_r+p_r^2+\cdots$ records how many cycles of length $r$ occur. Hence, the degree-$n$ part of $P_M$ is the sum of $p_\rho$ over all conjugacy classes of $S_n$ whose cycles have length at most $M$. By \eqref{eq:power-schur}, its $s_\lambda$-coefficient is the sum of $\chi^\lambda(\rho)$ over precisely those classes. There is no termwise positivity: the problem is whether the full bounded-cycle sum is nevertheless nonnegative for every irreducible character.

The second ingredient is the classical Lie representation. Let $\mathcal L_m$ be the multilinear component of the free Lie algebra on generators $x_1,\ldots,x_m$: it is spanned by Lie monomials in which every generator occurs exactly once. Permuting the generators gives an $S_m$-action, and its Frobenius characteristic is denoted by $\Lie_m$. Thrall's higher Lie modules arise by forming symmetrized tensor constructions from these multilinear Lie pieces. In the language of symmetric functions, such constructions are expressed by \emph{plethysm}. For the present introduction it is enough to regard $f[g]$ as a composition operation on symmetric functions that encodes the corresponding wreath-product induction; the precise conventions are recalled in Section~\ref{sec:symmetric}. The functions $\Lie_{n/d}[p_d]$ occur as the rectangular-cycle building blocks in characters induced from the cyclic subgroup generated by an $n$-cycle; see \cite{Thrall,Foulkes,Klyachko,Jollenbeck,Reutenauer,Sundaram2020,Sundaram2021}.

For a general set $T$ of allowed cycle lengths, Sundaram introduced homogeneous functions $f_n^T$ whose symmetric powers recover the product $\prod_{m\in T}(1-p_m)^{-1}$. For the bounded interval $T=[M]=\{1,\ldots,M\}$, her construction gives
\begin{equation}
 \F_{n,M}:=\sum_{\substack{d\mid n\\d\le M}}\Lie_{n/d}[p_d]
 \label{eq:F-definition}
\end{equation}
and the completed identity
\begin{equation}
 H\!\left[\sum_{n\ge1}\F_{n,M}\right]=P_M,
 \qquad H=1+h_1+h_2+\cdots.
 \label{eq:Sundaram-identity}
\end{equation}
When $G$ is the Frobenius characteristic of a representation, $h_j[G]$ is the characteristic of the corresponding symmetric-power/wreath-product construction. Therefore, if every homogeneous input $\F_{n,M}$ is Schur-positive, then the right-hand side of \eqref{eq:Sundaram-identity} is Schur-positive for every degree; this completed-series implication is stated explicitly in \cite[Corollary~3.1]{Sundaram2019b}.

The two extreme cutoffs already have familiar representation-theoretic meanings. For $M=1$, one has $\F_{n,1}=\Lie_n$. For $M\ge n$, every divisor of $n$ occurs, and $\F_{n,M}$ is the Frobenius characteristic of the representation induced from the trivial character of the cyclic subgroup generated by an $n$-cycle. An intermediate cutoff is more subtle: the selected divisors need not be the complete divisor set of any single divisor of $n$. For example, when $(n,M)=(12,3)$ the selected set is $\{1,2,3\}$, whereas no divisor of $12$ has exactly these divisors. Thus, the standard cyclic-induction formulas do not by themselves produce a representation for the truncated sum.

Sundaram's bounded-interval problem belongs to a broader program on Schur-positive sums of power sums, the conjugacy action of $S_n$, and variants of the Lie representation. The representation-theoretic framework for sums over conjugacy classes was developed in \cite{Sundaram2018}; interval sums in the character table were studied in \cite{Sundaram2019a}; the general functions $f_n^T$ and the plethystic product formula were introduced in \cite{Sundaram2019b}; and the associated Lie variants and prime-power higher Lie modules were developed in \cite{Sundaram2020,Sundaram2021}. The bounded-interval assertion was posed in \cite[Conjecture 5]{Sundaram2019b} and appears in its later formulation as \cite[Conjecture 4]{Sundaram2021}.

\begin{theorem}[Main theorem]
For all integers $n,M\ge1$, the symmetric function $\F_{n,M}$ is Schur-positive. Equivalently,
\[
 [s_\lambda]\F_{n,M}\in\mathbb Z_{\ge0}
 \qquad(\lambda\vdash n).
\]
\label{thm:main}
\end{theorem}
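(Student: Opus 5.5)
We follow the strategy outlined in the abstract. We begin from the power-sum expansion. Since $\Lie_m=\frac1m\sum_{e\mid m}\mu(e)p_e^{m/e}$, we have
\[
 \Lie_{n/d}[p_d]=\frac{d}{n}\sum_{e\mid n/d}\mu(e)\,p_{de}^{\,n/(de)}.
\]
Collecting terms by $k=de$ gives
\[
 \F_{n,M}=\frac1n\sum_{k\mid n}c_k\,p_k^{n/k},
 \qquad c_k=\sum_{\substack{d\mid k\\ d\le M}}d\,\mu(k/d).
\]
Thus $[s_\lambda]\F_{n,M}=\frac1n\sum_{k\mid n}c_k\chi^\lambda(k^{n/k})$. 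There are two things to prove: integrality and nonnegativity.

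For integrality, we write $\F_{n,M}$ as $\sum_{d\le M}\Lie_{n/d}[p_d]$. Each summand is an integral combination of Schur functions, because $\Lie_{n/d}$ is a genuine characteristic and plethysm with $p_d$ preserves integrality of Schur coefficients (up to signs). So integrality follows formally, and the real work is nonnegativity. Note that $c_1=1$, so the identity class contributes $f^\lambda/n$. We also have the uniform bound $|c_k|\le\sum_{d\mid k,d\le M}d\le\sigma(k)$. We will also use the truncated-M\"obius bound $|c_k|\le \min(\sigma(k),\,kM\text{-type})$ where convenient.

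\emph{Case A: hook-dominated shapes,} meaning $\lambda_1>n/2$ (the case $\lambda_1'>n/2$ is symmetric, using $\omega$ and tracking the sign $\omega p_k^{n/k}=\pm p_k^{n/k}$). We write $\lambda=(n-m,\mu)$ with $m<n/2$ and apply the Bernstein creation operator / Murnaghan--Nakayama stripping of the long first row. This expresses $\chi^\lambda(k^{n/k})$ as a ``hook constant'' plus a remainder that involves only characters of $S_j$ with $j\le m$ evaluated on rectangular classes. Concretely, we expand $s_\lambda=\sum_j(-1)^j h_{n-m+j}\,e_j^\perp s_\mu$-type terms and pair with $p_k^{n/k}$. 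The hook constants, summed against $c_k/n$, reduce to multiplicities of the trivial and sign characters in the restriction of $\F_{n,M}$ to $\langle(1\,2\cdots n)\rangle$-type data. These can be computed exactly: $\frac1n\sum_{k\mid n}c_k\cdot 1$ and $\frac1n\sum_{k\mid n}c_k(\pm1)^{n-n/k}$ count fixed points of necklace-like objects, and both are nonnegative integers. The remainder is controlled by a binomial contraction of the form $\sum_k |c_k|\binom{n/k}{\le m/k}\ll\binom{n}{m}\cdot(\text{small})$, while $f^\lambda\ge\binom{n-m}{m}f^\mu$-type bounds hold. This must be made uniform in $m$ from $1$ to $n/2$, and small $m$ will need explicit checking.

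\emph{Case B: balanced shapes,} meaning $\lambda_1,\lambda_1'\le n/2$. Swanson's opposite-hook estimate gives $f^\lambda\ge C\cdot$ (a Catalan-scale quantity) $\approx 2^n n^{-3/2}$. Fomin--Lulov gives $|\chi^\lambda(k^{n/k})|\le \frac{(n/k)!\,k^{n/k}}{n!^{1/k}}(f^\lambda)^{1/k}$-type bounds for $k\ge2$. Combining these with $|c_k|\le\sigma(k)$, we need
\[
 \sum_{k\mid n,k\ge2}\sigma(k)\,|\chi^\lambda(k^{n/k})|<f^\lambda .
\]
This reduces to $f^\lambda$ exceeding roughly $(n!)^{1/2}$-free quantities such as $n^{O(1)}\,(f^\lambda)^{1/2}\sqrt{n}\,\cdots$. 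That holds for $n\ge n_0$ by the Catalan bound, and small $n$ will be verified by direct computation.

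We expect the main obstacle to be Case A near the threshold $m\approx n/2$. There the hook constant and the remainder are comparable, and the binomial contraction must be sharp enough to handle every $m$ simultaneously. We would first try bounding the remainder by $\sum_{k\ge2}\sigma(k)\binom{n/k}{\lfloor m/k\rfloor}f^{\mu}$ against $\binom{n-m}{m}$-ratio lower bounds on $f^\lambda/f^\mu$. If that fails, we would fall back on a finite computation for moderate $n$.
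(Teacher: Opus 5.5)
Your architecture (power-sum expansion with $c_k=\psi_M(k)$, Bernstein localization when $\lambda_1>n/2$ or $\lambda_1'>n/2$, Swanson plus Fomin--Lulov for balanced shapes) is the paper's. Your integrality argument is also fine, and simpler than the paper's Foulkes decomposition: $g[p_d]=g(x_1^d,x_2^d,\ldots)$ has integer monomial coefficients, so it lies in $\Lambda_{\mathbb Z}$.

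The genuine gap is the dimension input in Case A. You measure the remainder on the scale $\binom nm f^\mu$ but offer only $f^\lambda\ge\binom{n-m}{m}f^\mu$, which collapses near $m\approx n/2$. For $n=2m+2$ it gives $f^\lambda/f^\mu\ge\binom{m+2}{2}$. Yet the $k=2$ part of your remainder bound alone is $f^\mu\sum_{q=1}^{\lfloor m/2\rfloor}\binom{m+1}{q}$, of order $2^mf^\mu$. This regime occurs for every $n$, so a finite computation cannot close it. The obstruction there is not the hook constant, which is $O(n)$ and appears only for $\mu=(1^m)$, where $f^\lambda=\binom{n-1}{m}$. It is the weakness of this lower bound.

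The missing idea is the sharp comparison with the two-row shape,
\[
 f^{(n-m,\mu)}\ \ge\ f^\mu\Bigl(\tbinom nm-\tbinom n{m-1}\Bigr)=f^\mu f^{(n-m,m)}
 \qquad(\mu\vdash m,\ m<n/2).
\]
It follows from the hook-length formula $f^{(n-m,\mu)}=\binom nm f^\mu\prod_{j=1}^{m}\frac{n-m-j+1}{n-m-j+1+\mu_j'}$ by showing, via box transfers in $\mu'$, that the product is smallest at $\mu=(m)$. This bound is of order $\binom nm/m$. With it, the contraction $\sum_{d=2}^{m}|c_d|\sum_{q=1}^{\lfloor m/d\rfloor}\binom{\lfloor n/d\rfloor}{q}<\tfrac23f^{(n-m,m)}$ can be proved uniformly in $m$. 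For fixed $m$, $d\le m$ and $q\le m/d$, the quotient $\binom{\lfloor n/d\rfloor}{q}/f^{(n-m,m)}$ is nonincreasing in $n\ge2m+1$; at a jump of $\lfloor n/d\rfloor$ the condition reduces exactly to $qd\le m$. So only the endpoint $n=\max\{n_0,2m+1\}$ matters. There $f^{(m+1,m)}$, of order $4^mm^{-3/2}$, dominates $2^m$ and $m^22^{2m/3}$.

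Two smaller points. First, the hook constants do not enter as the nonnegative integers $[s_{(n)}]\F_{n,M}=0$ and $[s_{(1^n)}]\F_{n,M}=\varepsilon_{n,M}$ themselves. After separating the identity class, they contribute $-\delta_\mu(-1)^m$ (row case) and $\delta_\mu(-1)^m(n\varepsilon_{n,M}-1)$ (column case) to $n[s_\lambda]\F_{n,M}$, where $\delta_\mu=1$ iff $\mu=(1^m)$. These can equal $-1$ or $-(n-1)$ and must be absorbed by $f^\lambda=\binom{n-1}{m}$. For $m=1$ the column coefficient is exactly $1-\varepsilon_{n,M}$, so you need the exact value $\varepsilon_{n,M}\in\{0,1\}$ (from restricting $\operatorname{sgn}$ to $C_n$), not just its nonnegativity.

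Second, your small-$n$ verification is left unspecified. With weights $\sigma(k)$ (e.g.\ $\sigma(6)=12$ against $|\psi_M(6)|\le4$), the balanced estimate does not yet close at $n=12$. The paper needs no computation. It disposes of elementary cutoffs structurally: $\F_{n,M}=\ell_n^{(K)}$ when $D_{n,M}$ is the divisor set of some $K\mid n$, and $\F_{n,M}=\ell_n^{(n)}-p_n$, checked on hooks via exterior powers, when $D_{n,M}$ contains all proper divisors. This forces $n=12$, $n\ge18$, or $n\ge30$ according to the number of prime factors. It then uses sharper weights: $|\psi_M(2)|\le1$, $|\psi_M(d)|\le d$ when $d$ has at most two prime factors, and $|\psi_M(d)|\le2d$ in general via Tao's finite-divisor M\"obius bound.
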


We next describe the mechanism of the proof. The power-sum formula established in Section~\ref{sec:symmetric} yields
\begin{equation}
 n[s_\lambda]\F_{n,M}
 =f^\lambda+\sum_{\substack{d\mid n\\d>1}}
   \psi_M(d)\chi^\lambda(d^{n/d}),
 \qquad
 \psi_M(d)=\sum_{\substack{m\mid d\\m\le M}}m\mu(d/m),
 \label{eq:intro-central}
\end{equation}
where $f^\lambda=\chi^\lambda(1^n)$ is the dimension of the irreducible representation indexed by $\lambda$, and $d^{n/d}$ denotes the cycle type consisting of $n/d$ cycles of length $d$. The first term in \eqref{eq:intro-central} comes from the identity permutation and is positive. Our aim is to show that the weighted contribution of all nonidentity rectangular cycle types cannot outweigh it. A decomposition into Foulkes characters separately proves that the coefficients are integers, so the only issue that remains is to establish their signs.

The proof uses the intrinsic division of Young diagrams into two regions. Write $\lambda'$ for the conjugate partition, so that $\lambda_1$ and $\lambda_1'$ are the lengths of the first row and first column.

If $\max(\lambda_1,\lambda_1')>n/2$, the diagram has a \emph{majority row or column}. Suppose, for example, that $\lambda=(n-r,\mu)$ with $r<n/2$. The Bernstein creation formula separates $\chi^\lambda(d^{n/d})$ into a constant term, which occurs only for hooks, and a remainder that vanishes whenever $d>r$. The constant term is evaluated exactly by restricting the trivial or sign character to the long cyclic subgroup. The remaining terms therefore involve only short cycles, and one uniform binomial contraction controls them for every value of $r$; no fixed-width boundary analysis is needed.

If $\lambda_1,\lambda_1'\le n/2$, the diagram is \emph{balanced}. Swanson's opposite-hook estimate gives the Catalan-scale dimension bound
\[
 f^\lambda\ge
 \frac1{m_n+1}\binom n{m_n},
 \qquad m_n=\left\lfloor\frac{n-1}{2}\right\rfloor.
\]
The Fomin--Lulov estimate then bounds the character values on every nonidentity rectangular cycle type by a fractional power of $f^\lambda$. Uniform estimates for the truncated M\"obius weights $\psi_M(d)$ show that the positive identity contribution in \eqref{eq:intro-central} strictly dominates the entire remainder. This argument is global: it does not require a list of low-degree partitions or a table of individual character values.

Sections~\ref{sec:symmetric} and~\ref{sec:foulkes} establish the symmetric-function identities, integrality, and the divisor-prefix reductions. Section~\ref{sec:estimates} records the arithmetic and rectangular-character estimates. Sections~\ref{sec:majority} and~\ref{sec:balanced} treat majority and balanced diagrams, respectively. Section~\ref{sec:completion} completes the proof and applies Sundaram's identity. The final section concerns a separate congruence-class conjecture.

\section{Symmetric functions, characters, and completion}
\label{sec:symmetric}

Write $\Lambda_{\mathbb Q}=\bigoplus_{n\ge0}\Lambda^n_{\mathbb Q}$ for the graded ring of symmetric functions. We use the usual partition notation $\lambda\vdash n$, write $\lambda'$ for conjugation, and put $h_j=s_{(j)}$, $e_j=s_{(1^j)}$. The Hall inner product is normalized by
\[
 \langle s_\lambda,s_\mu\rangle=\delta_{\lambda\mu},
 \qquad
 \langle p_\rho,s_\lambda\rangle=\chi^\lambda(\rho).
\]
Thus $[s_\lambda]g=\langle g,s_\lambda\rangle$. We shall use the following representation-theoretic argument repeatedly of this coefficient. If $W$ is an $S_n$-module with character $\chi^W$, then
\begin{equation}
 [s_\lambda]\ch W
 =\langle\chi^W,\chi^\lambda\rangle_{S_n}
 =\dim\operatorname{Hom}_{S_n}(S^\lambda,W).
 \label{eq:Schur-multiplicity}
\end{equation}
In particular, if $W=\operatorname{Ind}_{H}^{S_n}U$, then the Frobenius reciprocity gives
\begin{equation}
 [s_\lambda]\ch W
 =\left\langle\chi^U,
   \operatorname{Res}_{H}^{S_n}\chi^\lambda\right\rangle_H.
 \label{eq:Frobenius-coefficient}
\end{equation}
Thus, a Schur coefficient of an induced characteristic is computed by an explicit inner product of the character in $H$; when $U$ is irreducible, it is the multiplicity of $U$ within the restriction of $S^\lambda$.

We work in the degree completion
\[
 \widehat\Lambda_{\mathbb Q}:=\prod_{n\ge0}\Lambda^n_{\mathbb Q}.
\]
Its ring operations are defined degree by degree.

\begin{definition}
An element $F=\sum_{n\ge0}F_n\in\widehat\Lambda_{\mathbb Q}$ is Schur-positive if every homogeneous component $F_n$ is Schur-positive.
\end{definition}

We use plethystic notation throughout; in particular, $g[p_d]$ is obtained in the power-sum basis by replacing $p_j$ with $p_{jd}$. If $G$ has no constant term, then $H[G]=\sum_{j\ge0}h_j[G]$ is well defined in $\widehat\Lambda_{\mathbb Q}$: since every term of $G$ has the positive degree, only finitely many values of $j$ can contribute to any fixed homogeneous degree.

We shall use one standard plethysm-positivity fact. If $f$ and $g$ are finite nonnegative integral combinations of Schur functions, then $f[g]$ is again such a combination; see \cite[Chapter~7, Appendix~2, Theorem~A2.7]{Stanley}, or the equivalent wreath-product realization in \cite[Chapter~I, \S8 and Appendix~A, \S6]{Macdonald}.

Applied degree by degree, this gives the same statement in the completion: if $G=\sum_{n\ge1}G_n$ has Schur-positive homogeneous components, then $H[G]$ is Schur-positive. Sundaram discussed this completed-series implication in the present setting in \cite[Corollary~3.1]{Sundaram2019b}. This is the only plethysm-positivity input used below.

For a set $T\subseteq\mathbb N_{\ge1}$ containing $1$, Sundaram defines
\begin{equation}
 \psi_T(d)=\sum_{\substack{m\in T\\m\mid d}}m\mu(d/m),
 \qquad
 f_n^T=\frac1n\sum_{d\mid n}\psi_T(d)p_d^{n/d}.
 \label{eq:Sundaram-fT}
\end{equation}
Here $\mu$ is the number-theoretic M\"obius function. Sundaram proves
\begin{equation}
 H\!\left[\sum_{n\ge1}f_n^T\right]
 =\prod_{m\in T}(1-p_m)^{-1};
 \label{eq:Sundaram-general}
\end{equation}
see \cite[Theorem 3.1]{Sundaram2019b} and \cite[Theorem 4.4 and Corollary 4.5]{Sundaram2021}.

For $T=[M]=\{1,\ldots,M\}$, the classical Lie characteristic
\[
 \Lie_m=\frac1m\sum_{e\mid m}\mu(e)p_e^{m/e}
\]
gives
\[
 \Lie_{n/m}[p_m]
 =\frac{m}{n}\sum_{e\mid n/m}\mu(e)p_{me}^{n/(me)}
 \qquad(m\mid n).
\]
Fix $c\mid n$.  The monomial $p_c^{n/c}$ occurs in this expression precisely when
$m\mid c$ and $e=c/m$, and its coefficient is then $m\mu(c/m)/n$.  Summing over the
selected divisors $m\le M$ therefore gives
\begin{equation}
 \F_{n,M}=f_n^{[M]}
 =\frac1n\sum_{c\mid n}\psi_M(c)p_c^{n/c},
 \qquad
 \psi_M(c)=\sum_{\substack{m\mid c\\m\le M}}m\mu(c/m).
 \label{eq:power-formula}
\end{equation}
Pairing with $s_\lambda$ gives the coefficient formula that drives the proof. The $d=1$ term is the identity class: since $\psi_M(1)=1$, it contributes
\[
 \langle p_1^n,s_\lambda\rangle
 =\chi^\lambda(1^n)=f^\lambda,
\]
the dimension of $S^\lambda$. Every term with $d>1$ comes from the rectangular cycle type $d^{n/d}$. Hence
\begin{equation}
 n[s_\lambda]\F_{n,M}
 =f^\lambda+\sum_{\substack{d\mid n\\d>1}}
   \psi_M(d)\chi^\lambda(d^{n/d}).
 \label{eq:central-character}
\end{equation}

\section{Foulkes characters, integrality, and divisor prefixes}
\label{sec:foulkes}

Let $\sigma_n=(1\ 2\ \cdots\ n)$ and $C_n=\langle\sigma_n\rangle$. After choosing a primitive $n$th root of unity $\zeta_n$, let $\chi_n^{(k)}$ be the one-dimensional character of $C_n$ given by $\sigma_n\mapsto\zeta_n^k$, and put
\[
 \ell_n^{(k)}:=\ch\operatorname{Ind}_{C_n}^{S_n}\chi_n^{(k)}.
\]
Foulkes's decomposition is
\begin{equation}
 \ell_n^{(k)}=\sum_{d\mid(k,n)}\Lie_{n/d}[p_d].
 \label{eq:Foulkes}
\end{equation}
See \cite{Foulkes,Klyachko} and \cite[Corollary~4.12]{Sundaram2021}. Formula~\eqref{eq:Foulkes} is the bridge between divisor structure and genuine representations: if you the selected Lie pieces form the complete divisor set of one parameter $k$, their sum is automatically a character. For a general cutoff, the next M\"obius inversion rewrites the truncation as an integral combination of these Foulkes characters.

Assume $M<n$. For $k\mid n$, define
\[
 \theta_{M,n}(k):=\sum_{\substack{j\mid n/k\\kj\le M}}\mu(j).
\]

\begin{lemma}[Foulkes decomposition of the cutoff]
For $M<n$,
\begin{equation}
 \F_{n,M}=\sum_{k\mid n}\theta_{M,n}(k)\ell_n^{(k)},
 \qquad
 \sum_{k\mid n}\theta_{M,n}(k)=1.
 \label{eq:theta-decomposition}
\end{equation}
\label{lem:theta}
\end{lemma}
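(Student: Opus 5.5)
The plan is to substitute Foulkes's decomposition \eqref{eq:Foulkes} into the right-hand side of \eqref{eq:theta-decomposition} and collect the coefficient of each building block $\Lie_{n/d}[p_d]$. Write $g(k):=\ell_n^{(k)}=\sum_{d\mid k}\Lie_{n/d}[p_d]$ for $k\mid n$; here $(k,n)=k$. Then
\[
 \sum_{k\mid n}\theta_{M,n}(k)\ell_n^{(k)}
 =\sum_{d\mid n}\Bigl(\sum_{\substack{k\mid n\\ d\mid k}}\theta_{M,n}(k)\Bigr)\Lie_{n/d}[p_d],
\]
so it suffices to show that the inner sum equals $1$ when $d\le M$ and $0$ when $d>M$. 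Comparing with \eqref{eq:F-definition}, this is exactly the first identity.

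To evaluate the inner sum, I would insert the definition of $\theta_{M,n}(k)$ and reindex by $m=kj$. The pairs $(k,j)$ with $d\mid k$, $k\mid n$, $j\mid n/k$ and $kj\le M$ correspond to choosing $m\mid n$ with $m\le M$ and $d\mid k\mid m$, taking $j=m/k$. The inner sum becomes
\[
 \sum_{\substack{m\mid n,\ m\le M\\ d\mid m}}\ \sum_{d\mid k\mid m}\mu(m/k).
\]
Writing $k=dk'$ and $m=dm'$, the inner sum is $\sum_{k'\mid m'}\mu(m'/k')=[m'=1]=[m=d]$. Hence the total is $1$ if $d\le M$, since $m=d$ then divides $n$ and satisfies $m\le M$, and it is $0$ otherwise. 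This is the standard Möbius inversion on the divisor lattice of $n$. The only care needed is checking that the reindexing $(k,j)\leftrightarrow(k,m)$ is a bijection onto the stated range, which is immediate because $j\mid n/k$ is equivalent to $kj\mid n$.

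For the second identity, I would take $d=1$ in the computation: the coefficient sum $\sum_{k\mid n}\theta_{M,n}(k)$ equals $1$ because $1\le M$. Alternatively, it can be computed directly as $\sum_{m\mid n,\,m\le M}\sum_{k\mid m}\mu(m/k)=\#\{m=1\}=1$. The hypothesis $M<n$ is not needed for the algebra; I expect it only reflects that for $M\ge n$ one simply has $\F_{n,M}=\ell_n^{(n)}$.

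There is no genuine obstacle here. The main point of care is the interchange of summations together with the reindexing, so I would write the triple-index set $\{(d,k,m): d\mid k\mid m\mid n,\ m\le M\}$ explicitly before summing in the two orders.
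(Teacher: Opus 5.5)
Your proof is correct and follows essentially the same route as the paper: substitute Foulkes's decomposition, collect the coefficient of $\Lie_{n/d}[p_d]$, and reindex by the product $m=kj$ (the paper writes $k=dt$ and uses $a=tj$, so $m=da$). M\"obius inversion then leaves only the term $m=d$, which is present exactly when $d\le M$. The second identity is the case $d=1$ in both arguments, and you are right that the hypothesis $M<n$ plays no role in this algebra.
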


\begin{proof}
Fix $d\mid n$. On the right-hand side of \eqref{eq:theta-decomposition}, the coefficient of $\Lie_{n/d}[p_d]$ is
\[
 \sum_{d\mid k\mid n}\theta_{M,n}(k)
 =\sum_{t\mid n/d}\ \sum_{\substack{j\mid n/(dt)\\dtj\le M}}\mu(j).
\]
Setting $a=tj$ transforms this into
\[
 \sum_{\substack{a\mid n/d\\da\le M}}\ \sum_{j\mid a}\mu(j).
\]
The defining inversion property of the M\"obius function is
\[
 \sum_{j\mid a}\mu(j)=
 \begin{cases}
 1,&a=1,\\
 0,&a>1.
 \end{cases}
\]
Thus only $a=1$ can survive, and it is included exactly when $d\le M$. This proves the first identity. The second is the same coefficient comparison at $d=1$.
\end{proof}

The first consequence is integrality. Each $\theta_{M,n}(k)$ is an integer, and every $\ell_n^{(k)}$ is the Frobenius characteristic of the actual induced representation $\operatorname{Ind}_{C_n}^{S_n}\chi_n^{(k)}$. Equation~\eqref{eq:theta-decomposition} therefore expresses each Schur coefficient of $\F_{n,M}$ as an integer linear combination of integer multiplicities. So all Schur coefficients are integers, even though the weights $\theta_{M,n}(k)$ may have either sign.

If $M\ge n$, every divisor occurs and $\F_{n,M}=\ell_n^{(n)}$, so the positivity is immediate. We henceforth assume $M<n$. The second consequence of the Foulkes formula is that two natural divisor prefixes require no estimates at all.

For $M<n$, write
\[
 D_{n,M}:=\{d:d\mid n,\ d\le M\}.
\]
The following two divisor prefixes are already genuine characters.

\begin{proposition}
Let $M<n$.
\begin{enumerate}
\item If $D_{n,M}$ is the complete divisor set of some $K\mid n$, then $\F_{n,M}=\ell_n^{(K)}$.
\item If $D_{n,M}$ contains all proper divisors of $n$, then $\F_{n,M}=\ell_n^{(n)}-p_n$ is Schur-positive.
\end{enumerate}
\label{prop:prefixes}
\end{proposition}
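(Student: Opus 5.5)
Both parts follow by comparing divisor sets against Foulkes's decomposition \eqref{eq:Foulkes}; no character estimates are needed.

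For part (1), suppose $D_{n,M}=\{d: d\mid K\}$ for some $K\mid n$. Since $K\mid(K,n)$ trivially, we have $(K,n)=K$. Formula \eqref{eq:Foulkes} with $k=K$ therefore reads
\[
\ell_n^{(K)}=\sum_{d\mid K}\Lie_{n/d}[p_d].
\]
The index set here is exactly $D_{n,M}$, which is the index set of $\F_{n,M}$ in \eqref{eq:F-definition}. The two sums therefore agree term by term. Alternatively, one can check this through Lemma~\ref{lem:theta}: $\theta_{M,n}(k)$ equals $1$ at $k=K$ and vanishes elsewhere. The direct comparison is simpler, so we use that. Schur positivity then follows because $\ell_n^{(K)}$ is the characteristic of the genuine induced module $\operatorname{Ind}_{C_n}^{S_n}\chi_n^{(K)}$.

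For part (2), the assumption $M<n$ means $n\notin D_{n,M}$. Together with the hypothesis, this gives $D_{n,M}=\{d\mid n: d<n\}$. Applying \eqref{eq:Foulkes} with $k=n$ yields
\[
\ell_n^{(n)}=\sum_{d\mid n}\Lie_{n/d}[p_d]=\F_{n,M}+\Lie_1[p_n].
\]
Since $\Lie_1=p_1$, the last term is $\Lie_1[p_n]=p_n$, so $\F_{n,M}=\ell_n^{(n)}-p_n$.

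Positivity in part (2) is the only substantive step. We compute Schur coefficients with \eqref{eq:Frobenius-coefficient}, using $H=C_n$ and $U$ trivial. The coefficient $[s_\lambda]\ell_n^{(n)}$ is the multiplicity $a_\lambda$ of the trivial $C_n$-character in $\operatorname{Res}_{C_n}\chi^\lambda$. The coefficient $[s_\lambda]p_n$ is $\chi^\lambda(n)$, by \eqref{eq:power-schur}. The Murnaghan--Nakayama rule gives $\chi^\lambda(n)=(-1)^r$ when $\lambda=(n-r,1^r)$ is a hook, and $\chi^\lambda(n)=0$ otherwise.

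It therefore suffices to show $a_\lambda\ge1$ whenever $\lambda$ is a hook with $r$ even. Equivalently, the restriction of $\chi^{(n-r,1^r)}$ to $C_n$ must contain the trivial character. For this we use the realization of the hook module as $\wedge^r V$, where $V$ is the reflection representation. On $V$, the generator $\sigma_n$ acts with eigenvalues $\zeta_n^j$ for $j=1,\dots,n-1$. We must therefore find an $r$-subset of $\{1,\dots,n-1\}$ whose sum is divisible by $n$. For even $r\le n-1$, take $r/2$ pairs $\{j,n-j\}$. There are enough such pairs, since $\{1,\dots,n-1\}$ splits into $\lfloor (n-1)/2\rfloor$ pairs of this form, plus the fixed element $n/2$ when $n$ is even. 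If $r/2$ exceeds the number of available pairs, then $n$ is even and $r=n-1$, which is odd; so this case does not arise for even $r$. This gives $a_\lambda\ge1$, and hence every coefficient of $\ell_n^{(n)}-p_n$ is nonnegative.

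This counting step is the only point where care is needed. Everything else is bookkeeping of divisor sets.
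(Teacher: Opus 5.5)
Your proof is correct and follows the paper's argument: part (1) is Foulkes's formula at $k=K$, and part (2) pairs the Murnaghan--Nakayama value $\chi^{(n-r,1^r)}(n)=(-1)^r$ with a $C_n$-fixed vector in $\bigwedge^r V$ built from $r/2$ eigenvalue pairs $\{j,n-j\}$. Your count of available pairs is the same fact as the paper's observation that $2s=r\le n-1$ forces $s<n/2$.
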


\begin{proof}
\smallskip\noindent\emph{Complete divisor sets.}
If $D_{n,M}=\{d:d\mid K\}$, then \eqref{eq:Foulkes} with $k=K$ gives
\[
 \F_{n,M}=\sum_{d\mid K}\Lie_{n/d}[p_d]=\ell_n^{(K)}.
\]
\smallskip\noindent\emph{All proper divisors.}
For the second assertion,
\[
 \F_{n,M}=\sum_{\substack{d\mid n\\d<n}}\Lie_{n/d}[p_d]
 =\ell_n^{(n)}-p_n.
\]
For a non-hook, positivity is immediate because $\ell_n^{(n)}$ is a genuine character and $[s_\lambda]p_n=0$.  To justify the latter assertion, the Murnaghan--Nakayama rule computes $[s_\lambda]p_n=\chi^\lambda(n)$ by removing a single rim hook of size $n$ from $\lambda$. Such a removal is possible exactly when the whole Young diagram is a hook, say $\lambda=(n-r,1^r)$; its height is $r+1$, so the rule gives the sign $(-1)^r$. So $[s_\lambda]p_n$ vanishes for a nonhook and equals $(-1)^r$ for this hook.

It remains to consider hooks, where $p_n$ has coefficient $\pm1$. We spell out the induced-character calculation because it also explains why the required correction is present. Since $\chi_n^{(n)}$ is the trivial character of $C_n$, one has
\[
 \ell_n^{(n)}=\ch\operatorname{Ind}_{C_n}^{S_n}\1_{C_n}.
\]
Consequently, equations~\eqref{eq:Schur-multiplicity} and~\eqref{eq:Frobenius-coefficient} give
\begin{align*}
 [s_{(n-r,1^r)}]\ell_n^{(n)}
 &=\left\langle
   \operatorname{Ind}_{C_n}^{S_n}\1_{C_n},
   \chi^{(n-r,1^r)}\right\rangle_{S_n}\\
 &=\left\langle
   \1_{C_n},
   \operatorname{Res}_{C_n}^{S_n}\chi^{(n-r,1^r)}
   \right\rangle_{C_n}\\
 &=\dim\bigl(S^{(n-r,1^r)}\bigr)^{C_n}.
\end{align*}
The last equality holds because the inner product with the trivial character is the dimension of the fixed subspace. Since $C_n$ is generated by $\sigma_n$, this fixed subspace is precisely the eigenspace on which $\sigma_n$ has eigenvalue $1$.

A standard realization of hook Specht modules identifies
\[
 S^{(n-r,1^r)}\cong\bigwedge^r V,
\]
where $V$ is the standard $(n-1)$-dimensional representation; see, for example, \cite{Sagan}. To see the eigenvalues explicitly, let $\mathbb C^n$ be the permutation representation with basis $e_1,\ldots,e_n$. A Fourier basis diagonalizes the cyclic shift $\sigma_n$: its eigenvalues on $\mathbb C^n$ are $1,\zeta_n,\ldots,\zeta_n^{n-1}$. The $1$-eigenline is spanned by $e_1+\cdots+e_n$, and removing this trivial summand leaves the standard representation $V$. Hence the eigenvalues of $\sigma_n$ on $V$ are
\[
 \zeta_n,\zeta_n^2,\ldots,\zeta_n^{n-1}.
\]
If $v_{a_1},\ldots,v_{a_r}$ are corresponding eigenvectors, then
\[
 \sigma_n(v_{a_1}\wedge\cdots\wedge v_{a_r})
 =\zeta_n^{a_1+\cdots+a_r}
  (v_{a_1}\wedge\cdots\wedge v_{a_r}).
\]
Thus a choice of distinct exponents whose sum is divisible by $n$ produces a $C_n$-fixed vector in $\bigwedge^rV$.

We now compare this fixed-space dimension with the coefficient of $p_n$. If $r$ is odd, subtracting $[s_{(n-r,1^r)}]p_n=-1$ adds $1$ to the nonnegative fixed-space dimension. If $r=2s$ is even, the empty wedge handles $s=0$; for $s>0$, choose the exponents
\[
 1,n-1,2,n-2,\ldots,s,n-s.
\]
They are distinct because $2s=r\le n-1$ implies $s<n/2$, and their sum is $sn$, a multiple of $n$. The associated wedge is therefore fixed by $\sigma_n$. Hence $[s_{(n-r,1^r)}]\ell_n^{(n)}\ge1$, which absorbs the coefficient $+1$ contributed by $p_n$ when $r$ is even.
\end{proof}

Call a cutoff \emph{proper} if $M<n$, and call a proper cutoff \emph{elementary} if it is one of the two types in Proposition~\ref{prop:prefixes}. From this point onward, only non-elementary proper cutoffs require analysis. The next lemma identifies their first possible degrees. These values will later appear as natural left endpoints of the estimates, rather than as thresholds chosen to make the inequalities work. Let $\wpr(n)$ denote the number of distinct prime divisors of $n$.

\begin{lemma}[First non-elementary degrees]
If a proper cutoff is not elementary, then $n\ge12$. More precisely:
\begin{enumerate}
\item if $\wpr(n)\le2$, then either $n=12$ or $n\ge18$;
\item if $\wpr(n)\ge3$, then $n\ge30$.
\end{enumerate}
\label{lem:arithmetic-left-endpoints}
\end{lemma}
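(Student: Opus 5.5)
The plan is to turn ``non-elementary'' into a purely arithmetic condition on the ordered divisor list of $n$, and then check small $n$ by hand. Write the divisors of $n$ as $1=d_1<d_2<\cdots<d_t=n$. Because $D_{n,M}$ consists of the divisors not exceeding $M$, it is always a prefix $\{d_1,\ldots,d_j\}$. Since $M<n$ we have $j\le t-1$.

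First I check which prefixes are automatically elementary.
\begin{itemize}
\item If $j=t-1$, the prefix contains all proper divisors, so case~(2) of Proposition~\ref{prop:prefixes} applies.
\item If $j=1$, the prefix is $\{1\}$, the divisor set of $K=1$, so case~(1) applies.
\item If some $j$ has $D_{n,M}$ equal to the divisor set of some $K\mid n$, then $K$ must be the largest element $d_j$. Every divisor of $d_j$ divides $n$ and is at most $d_j$, so it already lies in the prefix. Hence case~(1) holds exactly when every $d_i$ with $i<j$ divides $d_j$.
\end{itemize}
Consequently a proper cutoff is non-elementary if and only if $2\le j\le t-2$ and some $d_i$ with $i<j$ fails to divide $d_j$. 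In particular the divisor poset of $n$ must not be a chain, so $n$ is not a prime power and $\wpr(n)\ge2$.

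Next I rule out $n=pq$ with $p<q$ primes. Its divisors are $1,p,q,pq$, so $t=4$ and the only admissible index is $j=2$. The prefix $\{1,p\}$ is the divisor set of $p$, hence elementary. So $n$ must have at least two distinct primes and at least five divisors. The smallest such $n$ with $\wpr(n)=2$ is $12$, and there the prefix $\{1,2,3\}$ (for $M=3$) is non-elementary, as noted in the introduction. For the range $13\le n\le 17$, each $n$ is a prime, a prime power ($16$), or a product of two distinct primes ($14=2\cdot7$, $15=3\cdot5$). All of these are excluded, which gives part~(1). For part~(2), the smallest integer with three distinct prime factors is $2\cdot3\cdot5=30$, so $\wpr(n)\ge3$ forces $n\ge30$. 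Combining the two parts gives $n\ge12$ in all cases.

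The only real step is the characterization of case~(1) through the largest element of the prefix. Once that is in place, the rest is a short finite check. The main point to be careful about is to use $M<n$, so that $j\le t-1$, and to record the degenerate prefix $\{1\}$ as elementary.
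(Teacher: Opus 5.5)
Your proof is correct and follows essentially the same route as the paper: both rule out prime powers, where every prefix is the divisor set of its largest element, and products $pq$, whose only proper prefixes $\{1\}$, $\{1,p\}$, $\{1,p,q\}$ are elementary, and then locate the smallest remaining degrees. The differences are cosmetic. You make the largest-element test for case~(1) explicit and rule out $13\le n\le 17$ by direct inspection, whereas the paper identifies $18=2\cdot 3^2$ as the next number of the form $p^aq^b$ with $a+b\ge 3$ by comparing it with $20$ and $24$.
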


\begin{proof}
For a prime power, every divisor prefix is the complete divisor set of a divisor. If $n=pq$ with distinct primes $p<q$, the only proper prefixes are $\{1\}$, $\{1,p\}$, and the set of all proper divisors; they are elementary. Thus a non-elementary cutoff with at most two distinct prime divisors must have the form $p^a q^b$ with $a,b\ge1$ and $a+b\ge3$. The least possibility is $2^2\cdot3=12$. The next is $2\cdot3^2=18$: using a prime at least $5$ gives at least $2^2\cdot5=20$, while increasing the exponent of $2$ in the pair $\{2,3\}$ gives at least $2^3\cdot3=24$. If at least three distinct primes divide $n$, then $n\ge2\cdot3\cdot5=30$.
\end{proof}

\section{Arithmetic and rectangular-cycle estimates}
\label{sec:estimates}

The nonidentity terms in \eqref{eq:central-character} involve only rectangular cycle types. Two independent inputs will control them: a character estimate for these rectangular classes, and an arithmetic estimate for the weights $\psi_M(d)$. We record the two inputs separately, beginning with the character side.

Fomin and Lulov proved the following estimate.

\begin{theorem}[Fomin--Lulov]
Let $\lambda\vdash n$ and write $n=ds$. Then
\begin{equation}
 |\chi^\lambda(d^s)|
 \le \frac{s!d^s}{(n!)^{1/d}}(f^\lambda)^{1/d}.
 \label{eq:FL}
\end{equation}
\end{theorem}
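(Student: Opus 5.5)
The plan is to follow the original Fomin--Lulov argument, which combines the Murnaghan--Nakayama rule for rectangular classes with the theory of $d$-cores and $d$-quotients. The first step is to observe that $\chi^\lambda(d^s)=0$ unless $\lambda$ has empty $d$-core. We may therefore assume the $d$-core is empty and write $(\lambda^{(0)},\ldots,\lambda^{(d-1)})$ for the $d$-quotient, with $|\lambda^{(i)}|=n_i$ and $\sum n_i=s$. Iterating the Murnaghan--Nakayama rule (removing $d$-rim hooks one at a time) and using the bijection between sequences of $d$-rim-hook removals and standard tableaux of the quotient gives, up to a global sign $\pm1$ coming from the $d$-sign of $\lambda$,
\[
 |\chi^\lambda(d^s)|=\binom{s}{n_0,\ldots,n_{d-1}}\prod_{i}f^{\lambda^{(i)}}.
\]
The point is that all removal sequences carry the same sign; this is the standard fact that the $d$-sign is independent of the removal order.

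The second step is to compare this quantity with $f^\lambda$ via the hook length formula. The hooks of $\lambda$ whose lengths are divisible by $d$ correspond bijectively to the hooks of the quotient partitions, each such hook having length $d$ times the corresponding quotient hook length. Writing $H_\lambda$ for the product of all hook lengths, this gives
\[
 \prod_{d\mid h} h = d^s\prod_i H_{\lambda^{(i)}}.
\]
Consequently,
\[
 |\chi^\lambda(d^s)|=\frac{s!}{\prod_i H_{\lambda^{(i)}}}=\frac{s!\,d^s}{\prod_{d\mid h}h}.
\]
It therefore suffices to prove
\[
 \Bigl(\prod_{d\mid h}h\Bigr)^{d}\ \ge\ \frac{n!}{f^\lambda}\cdot(\text{something}\ge1)=H_\lambda,
\]
that is,
\[
 \Bigl(\prod_{d\mid h}h\Bigr)^d\ge \prod_{\text{all }h}h.
\]
This inequality yields
\[
 \Bigl(\prod_{d\mid h} h\Bigr)^{-1}\le H_\lambda^{-1/d}=(f^\lambda/n!)^{1/d},
\]
which is exactly \eqref{eq:FL}.

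The main obstacle is this last hook inequality. The plan is to partition the cells of $\lambda$ into groups of $d$ cells, each group containing exactly one hook of length divisible by $d$ which dominates the others in the group. Concretely, in each row consider the cells in their order along the row. Because the core is empty, along any row the hook lengths behave so that the cells can be grouped into blocks of $d$ consecutive cells, where the hook lengths $h_1>h_2>\cdots$ within a block include exactly one multiple of $d$, and that multiple is at least every other hook length in the block. One way to arrange this is via the abacus: a row of $\lambda$ corresponds to a bead, and its hooks correspond to the gaps below it, so the gaps in each residue class modulo $d$ give the needed grouping. If this clean row-wise matching fails, the fallback is the Fomin--Lulov approach of comparing hook products through the abacus runner by runner, showing $\prod_{h\equiv 0}h^{d}\ge\prod h$ by an injection from the hooks to the multiples of $d$ with each hook mapped to a multiple at least as large and each multiple hit exactly $d$ times. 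Once this grouping is in hand, multiplying over the blocks gives the inequality and completes the proof.
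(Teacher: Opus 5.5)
The paper does not prove this statement. It quotes it from \cite[Theorem~1.1]{FominLulov} (see also \cite{Swanson}) and uses it as a black box. Your first two steps are the standard ones and are correct. For empty $d$-core, $|\chi^\lambda(d^s)|$ is the number of $d$-rim hook tableaux, and the $d$-divisible hooks of $\lambda$ are $d$ times the hooks of the quotient. Hence $|\chi^\lambda(d^s)|=s!\,d^s/\prod_{d\mid h_u}h_u$ exactly. Because this is an identity, the bound is \emph{equivalent} to the hook inequality $\prod_{u\in\lambda}h_u\le\bigl(\prod_{d\mid h_u}h_u\bigr)^d$. That inequality is therefore the whole content of the theorem, and it is exactly the step your proposal does not establish.

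Both mechanisms you offer for it are cellwise dominations. Each would produce a $d$-to-$1$ map $\phi$ from the cells of $\lambda$ onto the cells with $d\mid h$ satisfying $h_u\le h_{\phi(u)}$, and no such map exists in general.

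Take $d=2$ and $\lambda=(2,2)$, which has empty $2$-core. The hook lengths are $3,2,2,1$ and the even ones are $2,2$, so the cell with hook $3$ has nowhere to go. In particular, the first row, with hooks $3,2$, is not a block dominated by its multiple of $2$.

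The row-wise version fails even at the level of counting. For $\lambda=(3,1)$ and $d=2$, the first row has hooks $4,2,1$: three cells containing two even hooks. The second row has a single odd hook. On the abacus, this reflects the fact that the number of gaps of a given residue below a bead depends on the residue, so residue classes do not cut rows into blocks of size $d$.

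In the $(2,2)$ example the inequality $12=3\cdot2\cdot2\cdot1\le(2\cdot2)^2=16$ holds only in aggregate: the factor $3$ exceeding $2$ is paid for by the factor $1$ in another cell. What is missing is a genuinely multiplicative comparison of the two hook products, in which gains and losses in different cells are allowed to cancel. This is precisely the lemma Fomin and Lulov prove. Without it, or a citation of it as the paper gives, your argument yields only the exact formula for $|\chi^\lambda(d^s)|$, not the bound.
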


This is \cite[Theorem 1.1]{FominLulov}; see also \cite[Theorem 1.7]{Swanson}. We use the following normalized form.

\begin{lemma}[Normalized Fomin--Lulov bound]
Let $\lambda\vdash n=ds$. Then
\begin{equation}
 \frac{|\chi^\lambda(d^s)|}{f^\lambda}
 \le d^{-1/2}e^{d/(12n)}
 \left(\frac{\sqrt{2\pi n}}{f^\lambda}\right)^{1-1/d}.
 \label{eq:normalized-FL}
\end{equation}
\label{lem:normalized-FL}
\end{lemma}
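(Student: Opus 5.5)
The plan is to divide the Fomin--Lulov inequality \eqref{eq:FL} by $f^\lambda$ and control the factorials with the two-sided Stirling bounds. Dividing gives
\[
 \frac{|\chi^\lambda(d^s)|}{f^\lambda}
 \le \frac{s!\,d^s}{(n!)^{1/d}}\,(f^\lambda)^{-(1-1/d)}.
\]
Comparing with \eqref{eq:normalized-FL}, it therefore suffices to prove the purely numerical inequality
\[
 \frac{s!\,d^s}{(n!)^{1/d}}\le d^{-1/2}e^{d/(12n)}(2\pi n)^{(1-1/d)/2},
 \qquad n=ds.
\]

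For the numerator we use the upper Stirling bound $s!\le\sqrt{2\pi s}\,(s/e)^s e^{1/(12s)}$. Since $ds=n$ and $1/(12s)=d/(12n)$, this gives
\[
 s!\,d^s\le\sqrt{2\pi s}\,(n/e)^s e^{d/(12n)}.
\]
For the denominator we use the lower bound $n!\ge\sqrt{2\pi n}\,(n/e)^n$. Taking $d$-th roots yields
\[
 (n!)^{1/d}\ge(2\pi n)^{1/(2d)}(n/e)^{s},
\]
because $n/d=s$.

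Dividing the two estimates, the factors $(n/e)^s$ cancel exactly. What remains is $\sqrt{2\pi s}\,e^{d/(12n)}(2\pi n)^{-1/(2d)}$. Writing $s=n/d$ gives $\sqrt{2\pi s}=d^{-1/2}(2\pi n)^{1/2}$, so this quantity equals $d^{-1/2}e^{d/(12n)}(2\pi n)^{1/2-1/(2d)}$, which is the required bound.

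There is no genuine obstacle. The only points to check are that both Stirling inequalities hold for all integers $s,n\ge1$ (they do, including $s=1$), and that the exponent $1-1/d\ge0$, so the monotone manipulations of powers of $f^\lambda$ are legitimate. The case $d=1$ is trivially consistent, since both sides reduce to an equality up to the factor $e^{1/(12n)}\ge1$.
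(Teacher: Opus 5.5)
Your proof is correct and follows the paper's argument: divide \eqref{eq:FL} by $f^\lambda$, apply Robbins's upper Stirling bound to $s!$ and the lower bound to $n!$, and observe that the factors $(n/e)^s$ cancel. The paper does the same computation after taking logarithms, which is why it must first dispose of the case $\chi^\lambda(d^s)=0$ separately; your multiplicative version does not need that step.
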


\begin{proof}
If $\chi^\lambda(d^s)=0$, there is nothing to prove. Otherwise, use the two-sided Stirling bounds
\begin{equation}
 \left(m+\frac12\right)\log m-m+\log\sqrt{2\pi}
 \le\log(m!)
 \le\left(m+\frac12\right)\log m-m+\log\sqrt{2\pi}+\frac1{12m},
 \label{eq:Stirling}
\end{equation}
which follow from Robbins's explicit remainder estimate \cite{Robbins}. Divide \eqref{eq:FL} by $f^\lambda$, put $s=n/d$, apply the upper bound to $s!$ and the lower bound to $n!$, and take logarithms. The terms linear in $n$ cancel, leaving
\[
 \log\frac{|\chi^\lambda(d^s)|}{f^\lambda}
 \le\left(1-\frac1d\right)
 \left(\frac12\log n-\log f^\lambda+\log\sqrt{2\pi}\right)
 +\frac{d}{12n}-\frac12\log d.
\]
Exponentiation gives \eqref{eq:normalized-FL}.
\end{proof}

The important feature of \eqref{eq:normalized-FL} is the negative power of $f^\lambda$: once the dimension is large, every nonidentity rectangular character value is small relative to the identity value $f^\lambda$.

The global arithmetic bound comes from the following finite-divisor form of Landau's inequality, proved by Tao.

\begin{lemma}[Tao's finite-divisor Möbius bound]
For every integer $N\ge1$ and every real $x\ge1$,
\begin{equation}
 \left|\sum_{\substack{q\mid N\\q\le x}}\frac{\mu(q)}q\right|\le1.
 \label{eq:Tao-Mobius}
\end{equation}
The same bound holds with $q<x$ for every real $x>0$.
\label{lem:Tao-Mobius}
\end{lemma}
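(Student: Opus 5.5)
The plan is to reduce to squarefree $N$ and argue by induction on the number of prime factors, carrying a two-sided invariant stronger than $|\cdot|\le1$. Write
\[
 S_N(x):=\sum_{\substack{q\mid N\\ q\le x}}\frac{\mu(q)}q .
\]
Since $\mu$ vanishes off squarefree integers, $S_N=S_{\mathrm{rad}(N)}$, so I may assume $N$ is squarefree.

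The statement I would try to prove is the sharper sign bound
\[
 0\le S_N(x)\le 1\qquad(x\ge1).
\]
The base case $N=1$ gives $S_1(x)=1$. For the inductive step, write $N=pN'$ with $p\nmid N'$ and split divisors according to whether $p\mid q$:
\[
 S_N(x)=S_{N'}(x)-\frac1p\,S_{N'}(x/p),
\]
where $S_{N'}(y):=0$ for $y<1$. The upper bound is then immediate from $S_{N'}(x)\le1$ and $S_{N'}(x/p)\ge0$. Small cases support the stronger claim; for example $N=6$ gives the values $1,\tfrac12,\tfrac16,\tfrac13$ as $x$ increases.

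The main obstacle is the lower bound, which requires
\[
 p\,S_{N'}(x)\ge S_{N'}(x/p).
\]
The induction hypothesis alone does not give this, so a structural identity is needed. My first attempt is a Landau-type convolution. Let $A$ be the set of positive integers all of whose prime factors divide $N$, and set $G(y)=\sum_{k\in A,\,k\le y}1/k$, which is nondecreasing with $G(y)\ge1$ for $y\ge1$. Since $\sum_{q\mid n}\mu(q)=[n=1]$, summing $1/n$ over $n\in A$ with $n\le x$ and interchanging the order gives
\[
 \sum_{\substack{q\mid N\\q\le x}}\frac{\mu(q)}q\,G(x/q)=1 .
\]
I would then use Abel summation in $y$ to express $S_N(x)$ as a nonnegative combination of the values of this identity at the points $x/k$ with $k\in A$. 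Concretely, I would invert the relation using the multiplicative inverse of the weight $1/k$ restricted to $A$, namely $\prod_{p\mid N}(1-1/p)$ applied locally. If that inversion does not yield manifestly nonnegative weights, the fallback is to prove the two-sided bound directly by a second induction on $x$ over the sorted divisors of $N$: the jumps of $S_N$ occur only at divisors, and one checks that each negative jump at a divisor $q$ with $\mu(q)=-1$ is compensated by the preceding positive jumps.

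Finally, the variant with the strict inequality $q<x$ for real $x>0$ follows from the non-strict version. For $0<x\le1$ the sum is empty and equals $0$. For $x>1$, the sum over $q<x$ equals $S_N(x')$ for any $x'\in[1,x)$ exceeding all divisors of $N$ below $x$, and $S_N(x')$ lies in $[0,1]$ by the main bound.
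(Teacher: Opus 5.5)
Your strengthened invariant $0\le S_N(x)\le 1$ is false, so the induction cannot be carried out. It already fails with three primes. For $N=30$ and $x=5$,
\[
 S_{30}(5)=1-\tfrac12-\tfrac13-\tfrac15=-\tfrac1{30}<0,
\]
and similarly $S_{210}(7)=-\tfrac1{105}$; your check at $N=6$ is too small to see this. This also breaks the step you called immediate. You derived the upper bound $S_N(x)\le1$ from $S_{N'}(x/p)\ge0$, which fails for $N'=30$, $x/p=5$.

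If you fall back to the true invariant $|S_{N'}|\le1$, the recursion $S_N(x)=S_{N'}(x)-\tfrac1pS_{N'}(x/p)$ only yields $|S_N(x)|\le1+\tfrac1p$. So induction on the number of prime factors does not propagate the bound at all. The Abel-summation inversion is never actually performed. The ``fallback'' of compensating each negative jump by earlier positive jumps is again an attempt to prove the false nonnegativity. Your reduction of the strict cutoff to the non-strict one is the same as the paper's, and it is fine once $[0,1]$ is replaced by $[-1,1]$.

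The paper does not prove this lemma; it cites Tao. For a self-contained argument, the missing idea is a counting comparison rather than a sign invariant. Put $\Phi(x)=\#\{1\le m\le x:\gcd(m,N)=1\}$ and $D^*(x)=\#\{d\mid N:1<d\le x\}$. M\"obius inversion gives $\Phi(x)=\sum_{d\mid N}\mu(d)\lfloor x/d\rfloor$, hence
\[
 xS_N(x)=\Phi(x)+\{x\}+\sum_{\substack{d\mid N\\1<d\le x}}\mu(d)\Bigl\{\frac xd\Bigr\}.
\]
A divisor $d>1$ of $N$ is not coprime to $N$. So the two counted sets are disjoint subsets of $\{1,\ldots,\lfloor x\rfloor\}$, and $\Phi(x)+D^*(x)\le\lfloor x\rfloor$. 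Each fractional part lies in $[0,1)$, and $\Phi(x)\ge1$ because $m=1$ is counted. This gives $xS_N(x)\le\Phi(x)+D^*(x)+\{x\}\le x$. It also gives $xS_N(x)\ge\Phi(x)-D^*(x)\ge2\Phi(x)-\lfloor x\rfloor\ge2-\lfloor x\rfloor>-x$. Together these yield $|S_N(x)|\le1$ with no positivity needed.
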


The displayed finite-divisor form is
\cite[equation~(10)]{TaoMobius}, stated there as a corollary
of \cite[Theorem~1.1]{TaoMobius}. For the strict-cutoff version, the sum is empty when $0<x\le1$.  When $x>1$, choose $y<x$ so close to $x$ that no divisor of $N$ lies in $[y,x)$; then $q<x$ is equivalent to $q\le y$, and \eqref{eq:Tao-Mobius} applies to $y\ge1$.

\begin{lemma}[Bounds for the truncated M\"obius weights]
For every $d,M\ge1$,
\begin{equation}
 |\psi_M(d)|\le2d.
 \label{eq:psi-global}
\end{equation}
If $d$ has at most two distinct prime divisors, then the sharper bound
\begin{equation}
 |\psi_M(d)|\le
 \max\{\varphi(d),d-\varphi(d)\}\le d
 \label{eq:psi-two-prime}
\end{equation}
holds. In particular, $|\psi_M(2)|\le1$ for every $M$.
\label{lem:psi-bounds}
\end{lemma}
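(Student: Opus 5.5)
The approach is to rewrite $\psi_M(d)$ as a tail of the complete divisor sum $\sum_{q\mid d}\mu(q)/q=\varphi(d)/d$. Substituting $q=d/m$ in the definition of $\psi_M(d)$, the condition $m\le M$ becomes $q\ge d/M$. This gives
\[
 \psi_M(d)=d\sum_{\substack{q\mid d\\ q\ge d/M}}\frac{\mu(q)}{q}
 =\varphi(d)-d\sum_{\substack{q\mid d\\ q<d/M}}\frac{\mu(q)}{q}.
\]
For the global bound \eqref{eq:psi-global}, I would apply the strict-cutoff form of Lemma~\ref{lem:Tao-Mobius} with $N=d$ and $x=d/M>0$, which bounds the last sum by $1$ in absolute value. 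Hence $|\psi_M(d)|\le\varphi(d)+d\le 2d$. No further input is needed here.

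For \eqref{eq:psi-two-prime}, I would avoid Tao's bound and enumerate directly, since only squarefree $q$ contribute. Write $d=p^aq^b$ with primes $p<q$, allowing $b=0$ for the one-prime case. The squarefree divisors are $1,p,q,pq$, and the retained set $\{q\mid d:\ q\ge d/M\}$ is an upper segment of this list ordered by size.

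\emph{Two primes.} The possible values of $\psi_M(d)$ are
\[
 0,\quad \frac{d}{pq},\quad \frac{d}{pq}-\frac{d}{q}=-\frac{d(p-1)}{pq},\quad
 \frac{d}{pq}-\frac dq-\frac dp=\varphi(d)-d,\quad \varphi(d).
\]
I then compare each with $\varphi(d)=d(p-1)(q-1)/(pq)$. The value $d/pq$ is at most $\varphi(d)$ because $(p-1)(q-1)\ge1$. The value $d(p-1)/(pq)$ is at most $\varphi(d)$ because $q-1\ge1$. The remaining values have absolute value $d-\varphi(d)$ or $\varphi(d)$.

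\emph{One prime.} The possible values are $0$, $-d/p=\varphi(d)-d$, and $\varphi(d)$, so the same bound holds.

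The bound $\max\{\varphi(d),d-\varphi(d)\}\le d$ is trivial. For $d=2$ we have $\varphi(2)=1=d-\varphi(d)$, which gives $|\psi_M(2)|\le1$.

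The only delicate point is the bookkeeping in the two-prime enumeration: the upper segments of $\{1,p,q,pq\}$ must be determined by size, and the ordering $p<q<pq$ fixes exactly which partial sums can occur. I expect this to be the main, though mild, obstacle. The general bound is an immediate consequence of Lemma~\ref{lem:Tao-Mobius} once the complementary-sum rewriting is in place.
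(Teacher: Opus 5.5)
Your proof is correct and follows the paper's argument. For \eqref{eq:psi-global} you use the same complementary-sum rewriting and Tao's strict-cutoff bound; you treat $d\le M$ uniformly through the empty sum at $x=d/M\le1$, whereas the paper splits that case off. For \eqref{eq:psi-two-prime} you enumerate the same partial sums over the squarefree divisors, indexed by $q=d/m$ rather than $m$, and you spell out the dominance comparisons that the paper only asserts. The only case you leave unstated is $d=1$, which is immediate since $\psi_M(1)=1=\varphi(1)$.
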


\begin{proof}
If $d\le M$, then all divisors of $d$ occur in the defining sum, and the convolution identity $\varphi=\operatorname{id}*\mu$ gives $\psi_M(d)=\varphi(d)$, where $\varphi$ is Euler's totient function. Suppose $d>M$ and put $x=d/M>1$. The change of variables $q=d/m$ gives
\[
 \frac{\psi_M(d)}d
 =\sum_{\substack{q\mid d\\q\ge x}}\frac{\mu(q)}q
 =\frac{\varphi(d)}d-
   \sum_{\substack{q\mid d\\q<x}}\frac{\mu(q)}q.
\]
The first term lies in $[0,1]$, and the second has absolute value at most $1$ by Lemma~\ref{lem:Tao-Mobius}. This proves \eqref{eq:psi-global}.

For the sharper assertion, the case $d=1$ is immediate, so assume $d>1$.  The cutoff $m\le M$ selects an initial segment of the nonzero summands when they are ordered by $m$. If $d=p^a$, the only nonzero summands occur at $m=p^{a-1}$ and $m=p^a$; the possible partial sums are
\[
 0,\quad -p^{a-1},\quad \varphi(d).
\]
Now let $d=p^a q^b$ with $p<q$, and put $A=p^{a-1}q^{b-1}$. The only nonzero summands occur, in increasing order, at
\[
 A,\quad pA,\quad qA,\quad pqA=d.
\]
The successive partial sums are
\[
 0,\quad A,\quad -(p-1)A,\quad -(p+q-1)A,\quad (p-1)(q-1)A.
\]
The last two absolute values are $d-\varphi(d)$ and $\varphi(d)$, and they dominate the preceding ones. This proves \eqref{eq:psi-two-prime}.
\end{proof}

We also use the following convenient divisor estimate.

\begin{lemma}[A divisor-sum estimate]
For every integer $n\ge1$,
\begin{equation}
 \sum_{d\mid n}\sqrt d\le3n^{3/4}.
 \label{eq:divisor-sum}
\end{equation}
\label{lem:divisor-sum}
\end{lemma}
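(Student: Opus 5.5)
We need to prove that $\sum_{d\mid n}\sqrt d\le 3n^{3/4}$ for every $n\ge1$. The plan is to use a multiplicative comparison. Put $g(n)=n^{-3/4}\sum_{d\mid n}\sqrt d$. Since $d\mapsto\sqrt d$ is multiplicative, so is $\sum_{d\mid n}\sqrt d$, and hence $g$ is multiplicative. It therefore suffices to show $\prod_{p^a\| n} g(p^a)\le 3$. To do this we bound each local factor $g(p^a)$ and show that only finitely many primes have a factor exceeding $1$.

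\emph{Local factors.} For a prime power,
\[
 g(p^a)=p^{-3a/4}\sum_{j=0}^a p^{j/2}
 = p^{-a/4}\,\frac{1-p^{-(a+1)/2}}{1-p^{-1/2}}
 < \frac{p^{-a/4}}{1-p^{-1/2}}.
\]
Since $a\ge1$, this is at most $p^{-1/4}/(1-p^{-1/2})$, and this bound is below $1$ once $p^{1/4}-p^{-1/4}\ge1$. Writing $t=p^{1/4}$, the condition is $t^2-t-1\ge0$, i.e.\ $t\ge$ the golden ratio $\approx1.618$, i.e.\ $p\ge 6.85$. So every prime $p\ge7$ contributes a factor at most $1$ for every exponent $a\ge1$.

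For $p\in\{2,3,5\}$ we need the supremum over $a\ge0$ of $g(p^a)$ exactly. The factor $p^{-a/4}$ decays while the partial geometric sum grows, so the maximum is attained at a small exponent and can be found by checking a few values:
\begin{itemize}
\item $p=2$: $a=1$ gives $(1+1.414)/1.682\approx1.435$; $a=2$ gives $(1+1.414+2)/2.828\approx1.561$; $a=3$ gives $(4.414+2.828)/4.757\approx1.522$. The values then decrease, since the bound $2^{-a/4}/(1-2^{-1/2})\approx 3.41\cdot2^{-a/4}$ drops below $1.56$ once $a\ge5$, and $a=4$ gives $(7.243+4)/8=1.405$. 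So $\sup g(2^a)\approx1.561$.
\item $p=3$: $a=1$ gives $(1+1.732)/2.280\approx1.198$; $a=2$ gives $(2.732+3)/5.196\approx1.103$. So the maximum is about $1.2$.
\item $p=5$: $a=1$ gives $(1+2.236)/3.344\approx0.968<1$.
\end{itemize}

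\emph{Conclusion and main obstacle.} Combining the local bounds gives $g(n)\le 1.561\cdot1.198\approx1.87\le3$, which is comfortably within the target. The only step needing care is certifying the maxima over exponents for $p=2,3$. For that I would use the tail bound $g(p^a)<p^{-a/4}/(1-p^{-1/2})$ to discard all large $a$ and then check the remaining finitely many cases directly. Because the constant $3$ leaves a lot of slack, crude estimates suffice throughout.
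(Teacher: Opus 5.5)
Your proof is correct, and it takes a genuinely different route from the paper.

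The paper never factors $n$. It pairs each divisor $d\le\sqrt n$ with $n/d$, enlarges the small divisors to all integers $1\le j\le N=\lfloor\sqrt n\rfloor$, and bounds $\sum_{j\le N}(\sqrt j+\sqrt{n/j})$ by $N^{3/2}+\sqrt n\,(2\sqrt N-1)\le 3n^{3/4}$ through an integral comparison. That is a uniform two-line estimate with no casework and no numerics.

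Your multiplicative reduction, together with the tail bound $g(p^a)<p^{-a/4}/(1-p^{-1/2})$, localizes everything to the primes $2$ and $3$, and it gives more than the lemma asks. Every local factor with $p\ge5$ is at most $1$; for $p=5$ this holds because $g(5)\approx0.968$ and the tail bound is below $0.81$ once $a\ge2$. Hence you actually obtain $\sup_n g(n)=g(4)\,g(3)=g(12)\approx1.87$. This is the optimal constant, attained at $n=12$.

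The price is certification. Your local maxima are decimal approximations. The cases $p=3$ with $a\ge3$ and $p=5$ with $a\ge2$ are covered by the tail bound (about $1.04$ and $0.81$ there), but only implicitly. Given the slack between $1.87$ and $3$, replacing each decimal by a crude rational bound completes your argument. The paper's pairing avoids this bookkeeping, and its weaker constant is all the balanced-diagram estimate needs.
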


\begin{proof}
Put $N=\lfloor\sqrt n\rfloor$. Pair each divisor $d\le\sqrt n$ with $n/d$ and enlarge the set of lower divisors to all $1\le j\le N$. Then
\[
 \sum_{d\mid n}\sqrt d
 \le\sum_{j=1}^{N}\left(\sqrt j+\sqrt{\frac nj}\right)
 \le N\sqrt N+\sqrt n\left(1+\int_1^N x^{-1/2}\,dx\right)
 \le3n^{3/4}.
\]
Counting $\sqrt n$ twice when $n$ is a square only enlarges the right-hand side.
\end{proof}

The majority argument will use the arithmetic bounds together with an exact localization to short cycles. The balanced argument will instead combine all three estimates with a global lower bound for $f^\lambda$.

\section{Majority diagrams}
\label{sec:majority}

Assume first that the first row contains more than half of the boxes. Write
\[
 \lambda=(n-r,\mu),
 \qquad |\mu|=r<\frac n2.
\]
The point of the majority condition is not that $r$ is bounded, but that the first row can be created in one step by a Bernstein operator. This simultaneously controls every distance from the row boundary.

The argument has three steps. We first show that cycles longer than the tail size $r$ contribute only a simple hook constant. We then compare $f^{(n-r,\mu)}$ with a two-row dimension and prove a uniform contraction for the remaining short cycles. Finally, we evaluate the row and column constants and conclude the positivity.

\subsection{Localization to short cycles}

Let $e_j^\perp$ denote the operator adjoint to multiplication by $e_j$ in the Hall inner product, and put
\[
 \delta_\mu=
 \begin{cases}
 1,&\mu=(1^r),\\
 0,&\text{otherwise}.
 \end{cases}
\]

\begin{lemma}[ A Bernstein localization]
Let $d\mid n$. There is a decomposition
\begin{equation}
 \chi^{(n-r,\mu)}(d^{n/d})
 =\delta_\mu(-1)^r+\eta_d
 \label{eq:Bernstein-decomposition}
\end{equation}
with
\begin{equation}
 \eta_d=0\quad(d>r),
 \qquad
 |\eta_d|\le f^\mu
 \sum_{q=1}^{\lfloor r/d\rfloor}\binom{n/d}{q}.
 \label{eq:eta-bound}
\end{equation}
\label{lem:Bernstein-character}
\end{lemma}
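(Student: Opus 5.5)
The plan is to write the Schur function of $(n-r,\mu)$ with the Bernstein creation operator, move every operator onto the power sum by adjointness, and read off the terms one at a time. Only the Hall inner product and elementary facts about $h_k^\perp$ and $e_i^\perp$ are needed.

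\emph{Step 1: Bernstein expansion.} Put $m=n-r$. Since $r<n/2$, we have $m>r\ge\mu_1$, so Bernstein's formula applies and gives
\[
 s_{(m,\mu)}=\sum_{i\ge0}(-1)^i h_{m+i}\,e_i^\perp s_\mu .
\]
Only $0\le i\le r$ contribute. Pairing with $p_d^{s}$, where $s=n/d$, and using adjointness,
\[
 \chi^{(n-r,\mu)}(d^{s})=\sum_{i=0}^{r}(-1)^i\bigl\langle h_{m+i}^\perp p_d^{s},\,e_i^\perp s_\mu\bigr\rangle .
\]

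\emph{Step 2: compute $h_k^\perp p_d^s$.} Use $h_k=\sum_{\nu\vdash k}z_\nu^{-1}p_\nu$ and the fact that $p_j^\perp$ acts as $j\,\partial/\partial p_j$. Only $\nu=(d^j)$ with $k=dj$ survives. It contributes
\[
 z_{(d^j)}^{-1}d^j\,\frac{s!}{(s-j)!}\,p_d^{s-j}=\binom{s}{j}p_d^{s-j}.
\]
Set $q=s-j$. Comparing degrees forces $r-i=dq$, so $i=r-dq$, and this is possible only for $0\le q\le\lfloor r/d\rfloor$. Hence
\[
 \chi^{(n-r,\mu)}(d^{s})=\sum_{q=0}^{\lfloor r/d\rfloor}(-1)^{r-dq}\binom{s}{q}\bigl\langle p_d^{q},\,e_{r-dq}^\perp s_\mu\bigr\rangle .
\]

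\emph{Step 3: isolate the constant and the remainder.} The $q=0$ term is $(-1)^r\langle 1,e_r^\perp s_\mu\rangle=(-1)^r\langle e_r,s_\mu\rangle=(-1)^r\delta_\mu$. This is the hook constant in \eqref{eq:Bernstein-decomposition}. Define $\eta_d$ to be the sum of the terms with $q\ge1$. These terms exist only if $d\le r$, so $\eta_d=0$ when $d>r$.

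\emph{Step 4: the bound on $\eta_d$ (the only step needing care).} By the triangle inequality it suffices to show $|\langle p_d^q,e_i^\perp s_\mu\rangle|\le f^\mu$ for each $i=r-dq$.

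First, $e_i^\perp s_\mu=s_{\mu/(1^i)}$ is a nonnegative integral combination $\sum_\nu s_\nu$. Here $\nu$ runs over partitions with $\mu/\nu$ a vertical strip of size $i$, by the dual Pieri rule. It is therefore the characteristic of a genuine $S_{dq}$-module. A character value is bounded in absolute value by the degree, so
\[
 |\langle p_d^q,e_i^\perp s_\mu\rangle|\le\sum_\nu f^\nu .
\]

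Second, compare this with $f^\mu$. Take $\nu$ and a standard tableau $T$ of shape $\nu$. Place $1,\dots,i$ in the vertical strip $\mu/\nu$, increasing down the rows, and shift the entries of $T$ by $i$. The strip boxes are not where the problem lies: Pieri removable strips are outer, so each strip box has no box of $\nu$ below it or to its right. Increasing down the columns is therefore fine. The real worry is rows: the box to the right of a strip box must not lie in $\nu$, and that holds because the strip occupies the ends of rows.

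The difficulty is that labelling the strip boxes $1,\dots,i$ gives them small labels although they sit at the outer ends of rows. That fails standardness. I would instead place the strip entries as the largest labels $dq+1,\dots,r$, again increasing down the rows. This gives an injection from pairs $(\nu,T)$ into standard tableaux of shape $\mu$. It is injective because $\nu$ is recovered as the set of boxes with labels at most $dq$. This yields $\sum_\nu f^\nu\le f^\mu$ and hence \eqref{eq:eta-bound}.

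The main obstacle is exactly this skew-dimension comparison: choosing the labelling of the strip so that the resulting tableau of shape $\mu$ is genuinely standard. Steps 1–3 are routine symmetric-function computations.
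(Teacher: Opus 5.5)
Your proof is correct and follows the paper's route: the Bernstein expansion, the dual Pieri rule, the formula $\sum_{q}(-1)^{r-dq}\binom{n/d}{q}\langle p_d^q,e_{r-dq}^\perp s_\mu\rangle$, the $q=0$ hook constant, and the bound via $|\chi^\nu|\le f^\nu$ together with $\sum_\nu f^\nu\le f^\mu$. The differences are only technical. You obtain $\binom{s}{q}$ from $h_k^\perp p_d^s$ using $p_d^\perp=d\,\partial/\partial p_d$, whereas the paper counts subsets that are unions of whole $d$-cycles in the induced module. Your tableau injection, which labels the strip last, is the paper's branching identity $f^\mu=\sum_\nu f^\nu f^{\mu/\nu}$ with $f^{\mu/\nu}\ge1$ made explicit.
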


\begin{proof}
Because $n-r>r\ge\mu_1$, the Bernstein operator that adjoins a first part $n-r$ produces the genuine partition $(n-r,\mu)$.  The Bernstein creation formula \cite[Chapter I, \S5]{Macdonald} therefore gives
\begin{equation}
 s_{(n-r,\mu)}
 =\sum_{j=0}^{r}(-1)^j h_{n-r+j}e_j^\perp s_\mu.
 \label{eq:Bernstein-creation}
\end{equation}
By the adjoint Pieri rule,
\[
 e_j^\perp s_\mu
 =\sum_{\substack{\nu\subseteq\mu\\
        \mu/\nu\text{ is a vertical }j\text{-strip}}}s_\nu,
\]
where a vertical strip has at most one box in each row.
\smallskip\noindent\emph{The induced-character value.}
The product $h_{n-r+j}s_\nu$ is the Frobenius characteristic of
\[
 W_\nu:=\operatorname{Ind}_{S_{n-r+j}\times S_{r-j}}^{S_n}
       (\1\otimes S^\nu).
\]
It is useful to make the value of this induced character explicit. One may realize $W_\nu$ as a direct sum of copies of $S^\nu$ indexed by the $(r-j)$-element subsets $A\subseteq\{1,\ldots,n\}$: a permutation sends the summand indexed by $A$ to the summand indexed by its image. Thus only subsets fixed setwise by the permutation contribute to the trace. If the permutation has type $d^{n/d}$, a fixed subset must be a union of whole $d$-cycles. So the character value is zero unless $d\mid r-j$. When $r-j=dq$, there are $\binom{n/d}{q}$ choices of $q$ cycles; on each corresponding summand the restricted permutation has cycle type $d^q$ and trace $\chi^\nu(d^q)$, while the complementary factor is trivial. Thus the value is
\[
 \binom{n/d}{q}\chi^\nu(d^q).
\]
Substitution into \eqref{eq:Bernstein-creation} yields
\begin{equation}
 \chi^{(n-r,\mu)}(d^{n/d})
 =\sum_{q=0}^{\lfloor r/d\rfloor}
 (-1)^{r-dq}\binom{n/d}{q}
 \sum_{\substack{\nu\subseteq\mu\\
 \mu/\nu\text{ vertical of size }r-dq}}
 \chi^\nu(d^q).
 \label{eq:Bernstein-character-full}
\end{equation}
\smallskip\noindent\emph{The constant term.}
The term $q=0$ is present exactly when $\mu$ itself is a vertical strip, and then equals $(-1)^r$. This is the constant in \eqref{eq:Bernstein-decomposition}; all remaining terms vanish when $d>r$.

\smallskip\noindent\emph{Bounding the remainder.}
For $s\le r$, the iterated branching rule says that the multiplicity of $S^\nu$ in the restriction of $S^\mu$ from $S_r$ to $S_s$ is the number $f^{\mu/\nu}$ of standard tableaux of skew shape $\mu/\nu$. Taking dimensions gives
\[
 f^\mu=\sum_{\substack{\nu\subseteq\mu\\|\nu|=s}}
 f^\nu f^{\mu/\nu}.
\]
Every skew Young diagram has at least one standard tableau: take any linear extension of the usual northwest-to-southeast partial order on its boxes.  Hence $f^{\mu/\nu}\ge1$ for every occurring subpartition $\nu$.  It follows that the sum of $f^\nu$ over any subfamily of these subpartitions, in particular over the vertical strips in \eqref{eq:Bernstein-character-full}, is at most $f^\mu$. Finally, $|\chi^\nu(\rho)|\le f^\nu$ because a finite-group representation may be chosen unitary and the absolute value of a trace is at most the dimension. Summing the terms with $q\ge1$ proves \eqref{eq:eta-bound}.
\end{proof}

\subsection{Dimension comparison and short-cycle contraction}

For $0\le r<n/2$, set
\begin{equation}
 g_r(n):=\binom nr-\binom n{r-1}
 =\binom nr\frac{n-2r+1}{n-r+1},
 \label{eq:gr}
\end{equation}
where $\binom n{-1}=0$. The hook-length formula for the two-row partition $(n-r,r)$ gives
\[
 f^{(n-r,r)}=\binom nr-\binom n{r-1}=g_r(n),
\]
so the comparison below is naturally normalized by an actual irreducible dimension.

\begin{lemma}[A comparison of dimension]
If $\lambda=(n-r,\mu)$ with $r<n/2$, then
\begin{equation}
 f^\lambda\ge f^\mu g_r(n).
 \label{eq:majority-dimension}
\end{equation}
\label{lem:majority-dimension}
\end{lemma}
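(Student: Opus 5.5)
We prove $f^{(n-r,\mu)}\ge f^\mu g_r(n)$ with the hook-length formula, comparing hook lengths row by row. Write $\lambda=(n-r,\mu)$ with $|\mu|=r<n/2$. For the boxes lying in rows $2,3,\ldots$, the hook lengths in $\lambda$ are exactly the hook lengths of $\mu$, since adjoining a row on top does not alter hooks below it. Hence
\[
 f^\lambda=\frac{n!}{\prod_{b\in\mu}h_\mu(b)\cdot\prod_{j=1}^{n-r}h_\lambda(1,j)}
 =f^\mu\,\frac{n!}{r!\,\prod_{j=1}^{n-r}h_\lambda(1,j)},
\]
and the claim reduces to the first-row inequality
\[
 \prod_{j=1}^{n-r}h_\lambda(1,j)\le \frac{n!}{r!\,g_r(n)}=\prod_{j=1}^{n-r}h_{(n-r,r)}(1,j).
\]
The last equality is the hook-length formula for the two-row shape $(n-r,r)$, whose second row contributes $r!$. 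So it suffices to compare the first-row hook products of $\lambda$ and of $(n-r,r)$.

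\textbf{Formula for the first-row hooks.} For $j\le n-r$ we have $h_\lambda(1,j)=(n-r-j)+1+\mu'_j$, where $\mu'_j$ is the length of column $j$ of $\mu$ and $\mu'_j=0$ for $j>\mu_1$. For $(n-r,r)$ the analogous column lengths are $\nu'_j=1$ for $j\le r$ and $0$ otherwise. The sequence $\mu'$ has sum $r$, and $(1^r)$ is the most spread-out such sequence: $\mu'$ is weakly decreasing and majorizes $(1^r,0,\ldots)$.

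\textbf{Majorization step.} Set $a_j=n-r-j+1$, which is strictly decreasing in $j$. We must show that $\prod_j(a_j+\mu'_j)\le\prod_j(a_j+\nu'_j)$ whenever $\mu'\succeq\nu'$. We move one unit at a time, transforming $\mu'$ into $(1^r)$ by elementary moves. Each move takes one box from a column $i$ with $\mu'_i\ge2$ and places it into an empty column $k>i$; here $k\le r<n-r$, so $k$ is a genuine column of the first row. Such a move replaces $(a_i+x)\,a_k$ by $(a_i+x-1)(a_k+1)$, where $x=\mu'_i$. The change in the product is
\[
 a_i+x-1-a_k=(k-i)+x-1>0,
\]
and the other factors are unchanged and positive. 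Hence the product strictly increases at each step. We repeat until every column has length at most $1$, which gives the shape $(n-r,r)$.

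The only delicate point is the bookkeeping. After each move the column sequence must stay weakly decreasing, so that it is still a valid $\mu'$, or we can simply ignore shape validity, since only the product inequality matters. We also need every target column $k\le r$ to lie within the first row, which holds because $n-r>r$. This majority hypothesis is exactly what is used. The case $r=0$ is trivial.
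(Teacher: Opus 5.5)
Your proof is correct and is essentially the paper's argument. Both proofs use the hook-length formula to isolate the first-row hooks $a_j+\mu'_j$, and both show that moving a box from a column $i$ of length $x\ge2$ to an empty column $k>i$ changes the two affected factors by $(k-i)+x-1>0$, so the product is maximized at $\mu'=(1^r)$. The paper then evaluates $\prod_{j\le r}(a-j+1)/(a-j+2)=(n-2r+1)/(n-r+1)$ directly, whereas you compare with the first-row hooks of $(n-r,r)$ via $f^{(n-r,r)}=g_r(n)$; this is the same computation.

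One point of bookkeeping needs fixing: you must guarantee that an empty column $k$ with $i<k\le r$ exists at every step. The paper does this by taking $i$ to be the last column of length at least $2$ and $k$ the first empty column, which also preserves weak decrease. So you should adopt that choice rather than ignore shape validity.
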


\begin{proof}
Put $a=n-r$. Separating the hooks in the first row from those in $\mu$, the hook-length formula gives
\begin{equation}
 f^{(a,\mu)}
 =\binom nr f^\mu
 \prod_{j=1}^{r}\frac{a-j+1}{a-j+1+\mu_j'},
 \label{eq:first-row-hook-factor}
\end{equation}
where $\mu_j'=0$ beyond the last column. The denominator product is largest when $\mu'=(1^r)$. To see this while remaining inside the set of partitions, suppose $\mu'\ne(1^r)$, choose $i$ to be the last index with $\mu_i'\ge2$, and let $j$ be the first index with $\mu_j'=0$. Replacing these entries by $\mu_i'-1$ and $1$ preserves weak decrease and the total sum.  If $D$ and $D_{\mathrm{new}}$ denote the old and new denominator products in \eqref{eq:first-row-hook-factor}, respectively, then all but two factors cancel.  Consequently, $D_{\mathrm{new}}-D$ has the same sign as
\[
 \begin{aligned}
 &(a-i+\mu_i')(a-j+2)\\
 &\qquad -(a-i+1+\mu_i')(a-j+1)
   =j-i+\mu_i'-1>0.
 \end{aligned}
\]
Thus each transfer increases the denominator product; iteration reaches $(1^r)$.  Since the numerator in \eqref{eq:first-row-hook-factor} is fixed, the product of ratios there is therefore at least
\[
 \prod_{j=1}^{r}\frac{a-j+1}{a-j+2}
 =\frac{a-r+1}{a+1}
 =\frac{n-2r+1}{n-r+1}.
\]
Together with \eqref{eq:gr}, this proves the claim.
\end{proof}

The next proposition is the short-cycle contraction behind all majority shapes. Its degree endpoints are supplied by Lemma~\ref{lem:arithmetic-left-endpoints}. The proof has three natural ranges. At $n=12$ the tail has size at most $5$; Initially, the deciding threshold is simply the arithmetic lower bound of 18. But once $r\ge 9$, it's the majority condition $n\ge 2r+1$ that becomes the critical cutoff.

\begin{lemma}[Monotonicity of the short-cycle ratio]
Fix $r\ge2$, $2\le d\le r$, and $1\le q\le\lfloor r/d\rfloor$. Then
\begin{equation}
 \frac{\binom{\lfloor n/d\rfloor}{q}}{g_r(n)}
 \label{eq:binomial-ratio}
\end{equation}
is nonincreasing in the integers $n\ge2r+1$.
\label{lem:short-cycle-monotonicity}
\end{lemma}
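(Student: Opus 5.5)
The plan is to compare consecutive integers $n$ and $n+1$, both at least $2r+1$, and show that the ratio \eqref{eq:binomial-ratio} does not increase from $n$ to $n+1$. Two multiplicative factors are involved: the growth factor of the denominator, $g_r(n+1)/g_r(n)$, and the growth factor of the numerator, which depends only on whether $\lfloor n/d\rfloor$ jumps. The proof reduces to showing that the denominator factor always dominates the numerator factor.

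First I compute the denominator factor from the closed form \eqref{eq:gr}. Using $\binom{n+1}{r}/\binom nr=(n+1)/(n+1-r)$, I get
\[
 \frac{g_r(n+1)}{g_r(n)}=\frac{(n+1)(n-2r+2)}{(n-2r+1)(n-r+2)}
 =1+\frac{r(n+3-2r)}{(n-2r+1)(n-r+2)}.
\]
The second form comes from expanding: the difference of numerator and denominator is $r(n+3-2r)$. Since $n\ge 2r+1$, every factor is positive and this quantity exceeds $1$. This settles the case in which $\lfloor (n+1)/d\rfloor=\lfloor n/d\rfloor$, because then the numerator is unchanged.

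In the remaining case $d\mid n+1$, set $m+1=(n+1)/d$. The floor rises from $m$ to $m+1$, and the numerator factor is
\[
 \binom{m+1}{q}\Big/\binom mq=1+\frac{q}{m+1-q}.
\]
This is increasing in $q$, so I bound $q$ by $r/d$. After cancelling $d$, the numerator factor is at most $1+r/(n+1-r)$; note that $n+1-r>0$. It then suffices to prove
\[
 \frac{r}{n+1-r}\le\frac{r(n+3-2r)}{(n-2r+1)(n-r+2)}.
\]
Set $A=n-2r+1>0$ and $B=n-r+1$. The inequality becomes $A(B+1)\le (A+2)B$, that is, $A\le 2B$. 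This holds trivially, since $B\ge A$.

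I do not expect a real obstacle. The only point needing care is the jump case, where the floor function introduces the divisibility condition. The crude replacement $q\le r/d$ has to be made in the direction that preserves the inequality. It works precisely because $d(m+1)=n+1$ lets the factors of $d$ cancel exactly. The hypotheses $d\ge2$ and $r\ge2$ are used only to ensure the range of $q$ is nonempty; positivity of all the factors comes from $n\ge 2r+1$.
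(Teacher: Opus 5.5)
Your proposal is correct and follows essentially the same route as the paper. Both arguments first show $g_r(n+1)/g_r(n)>1$ to settle the non-jump case. At a jump $n+1=d(N+1)$ they then use $qd\le r$ to bound the numerator growth $\tfrac{N+1}{N+1-q}$ by $\tfrac{n+1}{n+1-r}$, and finally show that this intermediate bound is at most $g_r(n+1)/g_r(n)$. The paper packages these last two steps as a factorization of $R_{n+1}/R_n$ into two factors, each at most $1$.
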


\begin{proof}
From \eqref{eq:gr},
\[
 \frac{g_r(n+1)}{g_r(n)}
 =\frac{(n+1)(n-2r+2)}{(n-r+2)(n-2r+1)}>1,
\]
since the numerator minus the denominator after clearing factors is $r(n-2r+3)>0$. Thus there is nothing to prove when $\lfloor(n+1)/d\rfloor=\lfloor n/d\rfloor$.

At a jump, write $n+1=d(N+1)$, so that the binomial numerator changes from $\binom Nq$ to $\binom{N+1}q$.  If $R_n$ denotes \eqref{eq:binomial-ratio}, then
\begin{align*}
 \frac{R_{n+1}}{R_n}
 &=\frac{(n-r+2)(n-2r+1)}{(n+1)(n-2r+2)}
   \frac{N+1}{N+1-q}\\
 &=\frac{(n-r+2)(n-2r+1)}{(n-r+1)(n-2r+2)}
   \left(\frac{N+1}{N+1-q}\frac{n-r+1}{n+1}\right).
\end{align*}
The first factor is smaller than $1$ (the denominator exceeds the numerator by $r$ after expansion).  The parenthesized factor is at most $1$ precisely when
\[
 q(n+1)\le r(N+1),
\]
and this is equivalent to $qd\le r$ because $n+1=d(N+1)$.  The latter inequality holds by the choice of $q$.
\end{proof}

The actual character sum contains only divisors of $n$. In the next proposition we enlarge it to all $2\le d\le r$; the floor in the binomial coefficient makes this universal majorant meaningful even when $d\nmid n$. The factor $2/3$ is chosen for a structural reason: in the admissible range it leaves one third of the dimension, enough to absorb the remaining trivial or sign-twisted hook constant.

\begin{proposition}[Short-cycle contraction]
Let $M<n$, and suppose that $D_{n,M}$ is not elementary. For every integer $r$ with $2\le r<n/2$,
\begin{equation}
 \sum_{d=2}^{r}|\psi_M(d)|
 \sum_{q=1}^{\lfloor r/d\rfloor}
 \binom{\lfloor n/d\rfloor}{q}
 <\frac23 g_r(n).
 \label{eq:short-cycle-contraction}
\end{equation}
\label{prop:short-cycle-contraction}
\end{proposition}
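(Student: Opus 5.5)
The plan is to use Lemma~\ref{lem:short-cycle-monotonicity} to reduce, for each fixed $r$, to the smallest admissible $n$, and then to verify the inequality there by a uniform estimate.

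First we bound the weights. Since every $d\le r<n/2$ and we only need an upper bound, we replace $|\psi_M(d)|$ by $\varphi(d)$ or by the bounds of Lemma~\ref{lem:psi-bounds}. Concretely, we use $|\psi_M(d)|\le d$ when $\wpr(d)\le2$ (which covers all $d<30$), $|\psi_M(d)|\le 2d$ in general, and $|\psi_M(2)|\le1$. The left side of \eqref{eq:short-cycle-contraction} divided by $g_r(n)$ is then at most
\[
 \sum_{d=2}^r w_d\sum_{q=1}^{\lfloor r/d\rfloor}\frac{\binom{\lfloor n/d\rfloor}{q}}{g_r(n)},
\]
with $M$-independent weights $w_d$. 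By Lemma~\ref{lem:short-cycle-monotonicity}, each summand is nonincreasing in $n\ge2r+1$. It therefore suffices to check the inequality at $n_0(r)=\max(2r+1,\,n_{\min})$. Here $n_{\min}$ is the arithmetic left endpoint from Lemma~\ref{lem:arithmetic-left-endpoints}: $12$ is admissible only for $r\le5$, and otherwise we take $18$. This gives the three ranges flagged before the lemma. For $r\le5$ we check at $n=12$; strictly, $n=12$ is the only non-elementary degree below $18$, so for $n=12$ we check directly, and for $n\ge18$ we check at $18$. For $6\le r\le8$ we check at $n=18$. For $r\ge9$ we check at $n=2r+1$. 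The finitely many cases with $r\le8$ are explicit binomial computations. For instance, at $n=12$, $r=5$ we have $g_5(12)=792-495=297$, and the short-cycle sum is small by comparison.

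The main obstacle is the infinite range $r\ge9$, $n=2r+1$. There $g_r(2r+1)=\binom{2r+1}{r}\cdot\frac{2}{r+2}$, which is of Catalan size $\approx 4^r r^{-3/2}$. The numerator terms are at most $\binom{\lfloor(2r+1)/d\rfloor}{q}$ with $qd\le r$. For $d=2$ this is at most $\sum_{q\le r/2}\binom{r}{q}\le 2^{r}$, and for $d\ge3$ it is at most $2^{(2r+1)/d}\le 2^{(2r+1)/3}$. The total is therefore at most
\[
 2^{r}+\sum_{d=3}^r 2d\cdot 2^{(2r+1)/3}\le 2^r+2r^2\,2^{(2r+1)/3}.
\]
We then need to show that this is less than $\tfrac23\cdot\frac{2}{r+2}\binom{2r+1}{r}$. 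Using $\binom{2r+1}{r}\ge 4^r/\sqrt{\pi(r+1)}\cdot$(constant), this reduces to an elementary inequality of the form $r^{3/2}2^{-r}\cdot C<1$ together with a similar inequality for the $r^{7/2}2^{-4r/3}$ term. Both hold for $r\ge9$ with room to spare. If necessary, we sharpen $\sum_q\binom{\lfloor n/d\rfloor}{q}$ for $d=2$ to a half-sum of binomials, which at $r=9$, $n=19$ gives at most $\sum_{q\le4}\binom9q=256$ against $g_9(19)=\binom{19}{9}\cdot\frac{2}{11}\approx16796$. We verify the smallest cases $r=9,10,11$ numerically and let the exponential comparison, proved by induction on $r$ via ratio of consecutive terms, handle the rest.
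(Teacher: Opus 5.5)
Your plan is essentially the paper's proof. It uses the same monotonicity reduction to $n_0=\max\{18,2r+1\}$ (with $n=12$ checked directly), the same finite checks for $r\le 8$, and the same crude-binomial versus central-binomial comparison at $n=2r+1$ for $r\ge 9$; your weights ($|\psi_M(d)|\le d$ for $d<30$, $|\psi_M(2)|\le1$) are only a harmless sharpening of the paper's uniform $2d$.

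One slip: your majorant $2^r+2r^2\,2^{(2r+1)/3}$ does not beat $\tfrac23 g_r(2r+1)$ at $r=9$ (about $13575$ versus $11197$), so the claim of room to spare from $r=9$ is false. Your numerical check at $r=9$ is therefore genuinely needed; alternatively, bound $\sum_{d=3}^{r}2d\le r(r+1)$ as the paper does, which gives about $7770<11197$ and closes $r=9$ too.
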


\begin{proof}
For $c>0$, write
\[
 A_{n,r}(c):=\sum_{d=2}^{r}cd
 \sum_{q=1}^{\lfloor r/d\rfloor}
 \binom{\lfloor n/d\rfloor}{q}.
\]
\smallskip\noindent\emph{The first admissible degree $n=12$.}
If $n=12$, then $r\le5$, and every integer $d\le r$ has at most two distinct prime factors. Lemma~\ref{lem:psi-bounds} therefore bounds the left-hand side of \eqref{eq:short-cycle-contraction} by $A_{12,r}(1)$. For $r=2$, this majorant is $12$, while $g_2(12)=54$. For $3\le r\le5$, it is at most $A_{12,5}(1)$, namely
\[
 2\left(\binom61+\binom62\right)
 +3\binom41+4\binom31+5\binom21=76.
\]
Formula
\begin{equation}
 \frac{g_{r+1}(n)}{g_r(n)}
 =\frac{(n-r+1)(n-2r-1)}{(r+1)(n-2r+1)}
 \label{eq:gr-ratio}
\end{equation}
shows that $g_r(12)$ increases for $3\le r\le5$, so $g_r(12)\ge g_3(12)=154$. The inequalities $3\cdot12<2\cdot54$ and $3\cdot76<2\cdot154$ prove the required factor $2/3$.

\smallskip\noindent\emph{The fixed left endpoint $n=18$.}
It remains to consider $n\ge18$. The uniform bound $|\psi_M(d)|\le2d$ replaces the left-hand side by $A_{n,r}(2)$. For fixed $r$, Lemma~\ref{lem:short-cycle-monotonicity} shows that every summand of $A_{n,r}(2)/g_r(n)$ is nonincreasing in $n$, and hence so is their sum. The admissible numerical range also requires $n\ge2r+1$. Therefore it is enough to evaluate the quotient at its left endpoint
\[
 n_0=\max\{18,2r+1\}.
\]
The integer $n_0$ need not itself support a non-elementary cutoff; this is immaterial, because the quotient is a universal majorant and is nonincreasing throughout the whole numerical range.
For $r=2,3$, $A_{18,r}(2)$ is $36$ and $72$, respectively, and $g_2(18)=135$, $g_3(18)=663$. For $4\le r\le8$, the majorant is increasing with $r$ and is at most $A_{18,8}(2)=1352$. Now $g_4(18)=2244$.  For $5\le r\le8$, one has $\binom{18}{r}\ge\binom{18}{5}$ and $(19-2r)/(19-r)\ge3/11$, so
\[
 g_r(18)=\binom{18}{r}\frac{19-2r}{19-r}
 \ge\binom{18}{5}\frac3{11}>2244;
\]
the last inequality is the integer comparison $3\cdot8568>11\cdot2244$.  Thus $3\cdot1352<2\cdot2244$, and the contraction holds throughout this range.

\smallskip\noindent\emph{The moving left endpoint $n=2r+1$.}
Suppose finally that $r\ge9$, so $n_0=2r+1$. Put $C_r=\binom{2r+1}{r}$. Since every binomial partial sum is at most the full binomial sum,
\begin{align}
 \sum_{d=2}^{r}2d
 \sum_{q=1}^{\lfloor r/d\rfloor}
 \binom{\lfloor(2r+1)/d\rfloor}{q}
 &\le4\,2^r
 +\sum_{d=3}^{r}2d\,2^{(2r+1)/3}\notag\\
 &<4\,2^r+r(r+1)2^{(2r+1)/3}.
 \label{eq:majority-tail}
\end{align}
Moreover,
\begin{equation}
 C_r\ge\frac{4^r}{\sqrt r}\qquad(r\ge5).
 \label{eq:central-binomial-lower}
\end{equation}
Indeed, if $a_r=C_r\sqrt r/4^r$, then
\[
 \left(\frac{a_{r+1}}{a_r}\right)^2
 =\frac{(2r+3)^2(r+1)}{4r(r+2)^2}
 =1+\frac{5r+9}{4r(r+2)^2}>1.
\]
Thus $a_r$ is increasing.  The initial case $a_5>1$ follows on squaring from
$5\cdot462^2>1024^2$.

Since $g_r(2r+1)=2C_r/(r+2)$, equations \eqref{eq:majority-tail} and \eqref{eq:central-binomial-lower} bound the quotient in \eqref{eq:short-cycle-contraction} by
\begin{equation}
 V_r:=\frac{(r+2)\sqrt r}{2}
 \left(4\,2^{-r}+r(r+1)2^{(1-4r)/3}\right).
 \label{eq:Vr}
\end{equation}
Both summands in $V_r$ decrease for $r\ge9$. Their consecutive ratios are, respectively,
\[
 \frac{r+3}{2(r+2)}\sqrt{\frac{r+1}{r}}
 \quad\text{and}\quad
 2^{-4/3}\frac{r+3}{r}\sqrt{\frac{r+1}{r}}.
\]
For $r\ge9$, the first is at most $(6/11)(4/3)=8/11<1$.  For the second, $2^{4/3}>5/2$, $(r+3)/r\le4/3$, and $\sqrt{(r+1)/r}<4/3$, so it is smaller than $32/45$.  Finally,
\[
 90\,2^{-35/3}
 =\frac{90}{2^{11}2^{2/3}}
 <\frac{90}{2^{11}(3/2)}=\frac{15}{512},
\]
where $2^{2/3}>3/2$.  And we have
\[
 V_9
 <\frac{33}{2}\left(\frac4{512}+\frac{15}{512}\right)
 =\frac{627}{1024}<\frac23.
\]
So $V_r<2/3$ for every $r\ge9$.
\end{proof}

\subsection{Hook constants and positivity}

The short-cycle estimate controls the remainder. What remains is to identify the constant term in the row and column cases. The two identities below do exactly that.

\begin{lemma}[The row and column constants]
If $M<n$, then
\begin{equation}
 \sum_{d\mid n}\psi_M(d)=0.
 \label{eq:psi-cancellation}
\end{equation}
Moreover, if
\[
 \eps_{n,M}:=[s_{(1^n)}]\F_{n,M},
\]
then
\begin{equation}
 \eps_{n,M}=
 \begin{cases}
 1,&n\text{ is even and }n/2\le M<n,\\
 0,&\text{otherwise}.
 \end{cases}
 \label{eq:sign-endpoint}
\end{equation}
\label{lem:hook-constants}
\end{lemma}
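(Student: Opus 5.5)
For \eqref{eq:psi-cancellation} I would exchange the order of summation. Writing $d=mt$ gives
\[
 \sum_{d\mid n}\psi_M(d)=\sum_{\substack{m\mid n\\ m\le M}}m\sum_{t\mid n/m}\mu(t).
\]
The inner sum vanishes unless $m=n$, and $m=n$ is excluded because $M<n$. So the total is $0$. Equivalently, pairing \eqref{eq:power-formula} with $s_{(n)}$ gives $n[s_{(n)}]\F_{n,M}=\sum_{d\mid n}\psi_M(d)$, so this identity says exactly that the trivial representation does not occur. That is the ``row constant''.

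For \eqref{eq:sign-endpoint} I would pair \eqref{eq:power-formula} with $s_{(1^n)}$. This gives
\[
 n\eps_{n,M}=\sum_{d\mid n}\psi_M(d)\,\varepsilon(d^{n/d}),
 \qquad
 \varepsilon(d^{n/d})=(-1)^{(d-1)n/d}=(-1)^{n-n/d}.
\]
I would then use the same interchange $d=mt$. The $m$-th inner sum becomes $\sum_{t\mid n/m}\mu(t)(-1)^{n-n/(mt)}$. Setting $N=n/m$ and summing over $u=N/t$, this is the Möbius transform of $u\mapsto(-1)^{n-u}$ evaluated at $N$.

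To evaluate it, I would use the identity $(-1)^{n-u}=(-1)^n\bigl(1-2[u\text{ odd}]\bigr)$. The Möbius transform of the constant $1$ is $[N=1]$. The Möbius transform of $[u\text{ odd}]$ is $\sum_{t\mid N,\ N/t\text{ odd}}\mu(t)$. This equals $[N=1]$ when $N$ is odd. When $N=2^a N'$ with $N'$ odd, it equals $\sum_{s\mid N'}\mu(2^a s)$, which is nonzero only when $a=1$ and $N'=1$, giving $\mu(2)=-1$.

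So the inner sum is nonzero only for $N=1$ and for $N=2$. For $N=1$, i.e.\ $m=n$, it contributes $(-1)^n(1-2)\cdot$, but $m=n$ is excluded. For $N=2$, i.e.\ $m=n/2$, the value is $(-1)^n\cdot 2=2$, using that $n$ is even.

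Therefore $n\eps_{n,M}=\frac n2\cdot 2=n$ when $n$ is even and $n/2\le M$, and $n\eps_{n,M}=0$ otherwise. This is \eqref{eq:sign-endpoint}.

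As a cross-check and alternative route, I would apply Lemma~\ref{lem:theta}. Since $e_n=\ch(\mathrm{sgn})$, we have $[s_{(1^n)}]\ell_n^{(k)}=1$ exactly when $\mathrm{sgn}$ restricted to $C_n$ equals $\chi_n^{(k)}$. That restriction sends $\sigma_n$ to $(-1)^{n-1}$, which forces $k=n$ for $n$ odd and $k=n/2$ for $n$ even. Hence $\eps_{n,M}$ equals $\theta_{M,n}(n)=0$ for $n$ odd, and $\theta_{M,n}(n/2)=\sum_{j\mid 2,\ jn/2\le M}\mu(j)=[n/2\le M]$ for $n$ even, using $M<n$.

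I expect no real obstacle. The only delicate step is the bookkeeping of signs in the Möbius transform of the parity function. The Foulkes-character check above independently confirms the answer.
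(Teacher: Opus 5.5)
Your proof is correct. The cancellation $\sum_{d\mid n}\psi_M(d)=0$ is proved exactly as in the paper. Your ``cross-check'' for $\eps_{n,M}$ is also precisely the paper's argument: Frobenius reciprocity shows that only the Foulkes character $\ell_n^{(k)}$ with $\chi_n^{(k)}=\operatorname{sgn}|_{C_n}$ contains $s_{(1^n)}$, so $\eps_{n,M}$ is $\theta_{M,n}(n)=0$ or $\theta_{M,n}(n/2)=[M\ge n/2]$.

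Your main route for $\eps_{n,M}$ is genuinely different. You pair \eqref{eq:power-formula} directly with the sign character and evaluate the M\"obius transform of $u\mapsto(-1)^{n-u}$. Writing $(-1)^{n-u}=(-1)^n(1-2[u\text{ odd}])$, you find that the inner sum vanishes unless $N=n/m\in\{1,2\}$, and that it equals $2$ at $N=2$. I checked each step, including $\sum_{s\mid N'}\mu(2^a s)=-[a=1,\ N'=1]$. The stray ``$\cdot$'' in the $N=1$ case is a harmless typo, since $m=n$ is excluded anyway.

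The paper's route is shorter once Lemma~\ref{lem:theta} is available, and it explains the answer conceptually: the sign character restricts to $\chi_n^{(n)}$ or $\chi_n^{(n/2)}$. However, it depends on Foulkes's decomposition. Your route is self-contained elementary arithmetic and treats the row and column constants by the same interchange of summation. It also evaluates directly the signed sum \eqref{eq:signed-psi-sum} that the paper later substitutes into the majority-column formula.

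Your remark that the first identity is equivalent to $[s_{(n)}]\F_{n,M}=0$ matches what the paper records at $r=0$ in Proposition~\ref{prop:majority}.
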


\begin{proof}
Interchanging the two divisor sums gives
\[
 \sum_{d\mid n}\psi_M(d)
 =\sum_{\substack{m\mid n\\m\le M}}
 m\sum_{q\mid n/m}\mu(q)=0,
\]
because the M\"obius inversion identity used in Lemma~\ref{lem:theta} makes the inner sum equal to $1$ when $n/m=1$ and to $0$ otherwise. Thus only $m=n$ could contribute, and that value is excluded by $M<n$.

We next compute the sign coefficient without suppressing the reciprocity step. Since
\[
 \ell_n^{(k)}=\ch\operatorname{Ind}_{C_n}^{S_n}\chi_n^{(k)},
\]
equations~\eqref{eq:Schur-multiplicity} and~\eqref{eq:Frobenius-coefficient} give
\begin{align*}
 [s_{(1^n)}]\ell_n^{(k)}
 &=\left\langle
   \operatorname{Ind}_{C_n}^{S_n}\chi_n^{(k)},
   \operatorname{sgn}\right\rangle_{S_n}\\
 &=\left\langle
   \chi_n^{(k)},
   \operatorname{Res}_{C_n}^{S_n}\operatorname{sgn}
   \right\rangle_{C_n}.
\end{align*}
Both characters in the last inner product are one-dimensional, so the value is $1$ exactly when they agree and is $0$ otherwise. Now
\[
 \operatorname{sgn}(\sigma_n)=(-1)^{n-1}.
\]
If $n$ is odd, this value is $1=\zeta_n^n$, so the restricted sign character is $\chi_n^{(n)}$; if $n$ is even, it is $-1=\zeta_n^{n/2}$, so the restriction is $\chi_n^{(n/2)}$. Substituting the decomposition \eqref{eq:theta-decomposition}, only the matching Foulkes character can contribute. Thus, for odd $n$,
\[
 \eps_{n,M}=\theta_{M,n}(n)=0,
\]
because $M<n$ excludes the only possible term $j=1$ in the definition of $\theta_{M,n}(n)$. For even $n$,
\[
 \eps_{n,M}=\theta_{M,n}(n/2)
 =\sum_{\substack{j\mid2\\(n/2)j\le M}}\mu(j).
\]
The term $j=1$ is present exactly when $M\ge n/2$, whereas the term $j=2$ is absent because $M<n$. Hence this sum equals $1$ precisely when $n/2\le M<n$ and equals $0$ otherwise.
\end{proof}

\begin{proposition}[Majority positivity]
Let $M<n$, suppose that $D_{n,M}$ is not elementary, and let $\lambda\vdash n$. If
\[
 \max(\lambda_1,\lambda_1')>\frac n2,
\]
then $[s_\lambda]\F_{n,M}\ge0$. More precisely, if $\lambda=(n-r,\mu)$ or $\lambda'=(n-r,\mu)$ with $2\le r<n/2$, then the coefficient is strictly positive.
\label{prop:majority}
\end{proposition}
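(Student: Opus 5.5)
We start from the central identity \eqref{eq:central-character} and insert the decomposition of Lemma~\ref{lem:Bernstein-character} for each $d>1$. In the row case $\lambda=(n-r,\mu)$ with $r<n/2$ this gives
\[
 n[s_\lambda]\F_{n,M}
 =f^\lambda+\delta_\mu(-1)^r\sum_{\substack{d\mid n\\ d>1}}\psi_M(d)
 +\sum_{\substack{d\mid n\\2\le d\le r}}\psi_M(d)\eta_d ,
\]
because $\eta_d=0$ for $d>r$. By \eqref{eq:psi-cancellation} and $\psi_M(1)=1$, the middle term equals $-\delta_\mu(-1)^r$. It is therefore nonzero only when $\lambda$ is a hook, and then it has absolute value $1$.

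\emph{The case $r\ge2$.} We bound $|\eta_d|$ by \eqref{eq:eta-bound}. Since $d\mid n$, we have $n/d=\lfloor n/d\rfloor$, so we may enlarge the sum to all $2\le d\le r$. Proposition~\ref{prop:short-cycle-contraction} and Lemma~\ref{lem:majority-dimension} then give
\[
 \Bigl|\sum_{d}\psi_M(d)\eta_d\Bigr|<\tfrac23 f^\mu g_r(n)\le\tfrac23 f^\lambda .
\]
This leaves $n[s_\lambda]\F_{n,M}>\tfrac13 f^\lambda-1$. We need $f^\lambda>3$, which holds because $f^\lambda\ge g_r(n)\ge g_2(n)\ge 54$ for $n\ge12$. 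Here we use Lemma~\ref{lem:arithmetic-left-endpoints} and the monotonicity of $g_r$ in $r$ for $r<n/2$, which follows from \eqref{eq:gr-ratio}; alternatively one can take $g_r(n)\ge g_2(n)$ directly from $f^\lambda\ge f^{(n-2,2)}$. The coefficient is an integer by Lemma~\ref{lem:theta}, so $n[s_\lambda]\F_{n,M}>0$ gives strict positivity.

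\emph{The column case.} Here $\lambda'=(n-r,\mu)$, and we use $\chi^{\lambda}(d^{n/d})=\operatorname{sgn}(d^{n/d})\chi^{\lambda'}(d^{n/d})$ with $\operatorname{sgn}(d^{n/d})=(-1)^{n-n/d}$. The remainder terms change only by signs, so the same absolute bound applies. The constant term becomes $\delta_\mu(-1)^r\sum_{d>1}(-1)^{n-n/d}\psi_M(d)$, which is no longer controlled by \eqref{eq:psi-cancellation}. We handle it by bounding it by $\sum_{d\mid n}|\psi_M(d)|\le 2\sum_{d\mid n}d$ and checking that this is below $\tfrac13 f^\lambda$ for hooks with $r\ge2$. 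This is plausible, since $f^\lambda\ge\binom{n-1}{2}$ grows faster than $\sigma(n)$ only for large $n$; if it fails we instead evaluate the signed sum exactly through the sign-twisted Foulkes character, as in the proof of \eqref{eq:sign-endpoint}. The cleanest route is the latter: the hook constant for the column is $\pm$ the coefficient produced by restricting $\operatorname{sgn}$ to $C_n$, namely the quantity $\eps_{n,M}\in\{0,1\}$ from Lemma~\ref{lem:hook-constants}, up to the identity-term correction. This again gives a bounded constant of absolute value at most $2$, which is absorbed by $\tfrac13 f^\lambda$.

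\emph{The case $r\in\{0,1\}$.} For $r=0$, $\lambda=(n)$ or $(1^n)$. For $(n)$, the trivial character gives $[s_{(n)}]\F_{n,M}=\frac1n\sum_d\psi_M(d)=0$ by \eqref{eq:psi-cancellation}. For $(1^n)$ we have $\eps_{n,M}\ge0$ by \eqref{eq:sign-endpoint}. For $r=1$, $\lambda=(n-1,1)$ or its conjugate, and $\chi^\lambda(d^{n/d})=-1$ for $d>1$ (respectively signed). We compute as above: the result is $(n-1+1)/n=1>0$ in the row case, and similarly in the column case we use $f^\lambda=n-1$ against $\sum_{d>1}|\psi_M(d)|$. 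If that crude bound fails, we use the Foulkes decomposition \eqref{eq:theta-decomposition} together with Frobenius reciprocity, since $\operatorname{sgn}\otimes\chi^{(n-1,1)}$ restricted to $C_n$ is explicit.

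The main obstacle is the column-side hook constant, where the cancellation identity \eqref{eq:psi-cancellation} is twisted by signs. We plan to resolve it via reciprocity with the restricted sign character rather than by estimation.
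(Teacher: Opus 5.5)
Your outline matches the paper's proof: Bernstein localization, the $\tfrac23 f^\lambda$ remainder bound from Proposition~\ref{prop:short-cycle-contraction} and Lemma~\ref{lem:majority-dimension}, the row constant $-\delta_\mu(-1)^r$ from \eqref{eq:psi-cancellation}, and exact evaluation for $r\in\{0,1\}$. The row case is fine.

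The gap is the column-side hook constant $\delta_\mu(-1)^r\sum_{d\mid n,\,d>1}(-1)^{n-n/d}\psi_M(d)$. You identify it with ``$\pm\eps_{n,M}$ up to the identity correction'' and bound it by $2$. But pair \eqref{eq:power-formula} with $s_{(1^n)}$, whose character takes the value $(-1)^{n-n/d}$ on $d^{n/d}$. This gives $\sum_{d\mid n}(-1)^{n-n/d}\psi_M(d)=n\eps_{n,M}$, so the constant is $\delta_\mu(-1)^r(n\eps_{n,M}-1)$. You have lost a factor of $n$. On your own reasoning the constant could equal $n-1$, and an absorbing bound like $f^\lambda>3$ does not control it.

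There are two repairs. The paper's: for a hook $f^\lambda=\binom{n-1}{r}\ge\binom{n-1}{2}$, and $\tfrac13\binom{n-1}{2}>n-1$ once $n>8$, hence for $n\ge 12$. Alternatively, note that the hypothesis forces $\eps_{n,M}=0$. By \eqref{eq:sign-endpoint}, $\eps_{n,M}=1$ requires $n$ even and $n/2\le M<n$, but then $D_{n,M}$ contains every proper divisor of $n$ and is elementary. With that observation the column constant is exactly $-\delta_\mu(-1)^r$. So your bound of $2$ is true, but not for the reason you give.

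The crude estimates you keep in reserve do fail, so the exact evaluation is not optional:
\begin{itemize}
\item For $r=2$ hooks at $n=12$, your bound $2\sigma(12)=56$ exceeds $\tfrac13\binom{11}{2}=\tfrac{55}{3}$.
\item For $\lambda=(2,1^{n-2})$, comparing $f^\lambda=n-1$ with $\sum_{d>1}|\psi_M(d)|$ fails at the non-elementary cutoff $(n,M)=(30,5)$. There $D_{30,5}=\{1,2,3,5\}$, and the weights $\psi_5(d)$ for $d=2,3,5,6,10,15,30$ are $1,2,4,-4,-6,-7,9$, with absolute sum $33>29$.
\end{itemize}
The correct computation is $n[s_{(2,1^{n-2})}]\F_{n,M}=(n-1)-(n\eps_{n,M}-1)$, so the coefficient is $1-\eps_{n,M}$.

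A minor point: $g_r(n)$ is not monotone in $r$ on $r<n/2$; for instance $g_7(18)=13260>g_8(18)=11934$. So that justification is wrong, although the inequality $f^\lambda>3$ you need is immediate anyway, since $g_r(n)\ge\frac{2}{n+1}\binom{n+1}{r}\ge n$ for $2\le r<n/2$.
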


\begin{proof}
For $r\ge2$, Lemmas~\ref{lem:Bernstein-character} and~\ref{lem:majority-dimension}, together with Proposition~\ref{prop:short-cycle-contraction}, first give a uniform error estimate; we then insert the appropriate row or column constant.

\smallskip\noindent\emph{The common error bound.}
Suppose $\lambda=(n-r,\mu)$ or $\lambda'=(n-r,\mu)$ with $r<n/2$. By \eqref{eq:eta-bound}, enlarging the divisor sum to all integers $2\le d\le r$ gives
\begin{align*}
 \left|
 \sum_{\substack{d\mid n\\2\le d\le r}}
 \psi_M(d)\eta_d\right|
 &\le f^\mu
 \sum_{d=2}^{r}|\psi_M(d)|
 \sum_{q=1}^{\lfloor r/d\rfloor}
 \binom{\lfloor n/d\rfloor}{q}\\
 &<\frac23 f^\mu g_r(n)
 \le\frac23 f^\lambda,
\end{align*}
where the strict inequality is Proposition~\ref{prop:short-cycle-contraction} and the last inequality is Lemma~\ref{lem:majority-dimension}. Thus
\begin{equation}
 \left|
 \sum_{\substack{d\mid n\\2\le d\le r}}
 \psi_M(d)\eta_d\right|<\frac23 f^\lambda.
 \label{eq:majority-error}
\end{equation}
The same estimate remains valid after multiplying each summand by the sign $(-1)^{n-n/d}$, since these factors have absolute value $1$.

\smallskip\noindent\emph{A majority row.}
We now insert the decomposition term by term into \eqref{eq:central-character}. In the row case, the constant part contributes
\[
 \delta_\mu(-1)^r
 \sum_{\substack{d\mid n\\d>1}}\psi_M(d).
\]
Since $\psi_M(1)=1$, equation~\eqref{eq:psi-cancellation} says that the displayed divisor sum is $-1$. Moreover, $\eta_d=0$ for $d>r$. Therefore
\begin{equation}
 n[s_{(n-r,\mu)}]\F_{n,M}
 =f^{(n-r,\mu)}-\delta_\mu(-1)^r
 +\sum_{\substack{d\mid n\\2\le d\le r}}
  \psi_M(d)\eta_d.
 \label{eq:majority-row-coefficient}
\end{equation}

\smallskip\noindent\emph{A majority column.}
Now suppose $\lambda'=(n-r,\mu)$. The standard relation $S^\lambda\cong S^{\lambda'}\otimes\operatorname{sgn}$ gives
\[
 \chi^\lambda(d^{n/d})
 =(-1)^{n-n/d}\chi^{\lambda'}(d^{n/d}),
\]
because a permutation with $n/d$ cycles has sign $(-1)^{n-n/d}$. To evaluate the constant part, pair the power-sum formula \eqref{eq:power-formula} with the sign character. By the definition of $\eps_{n,M}$,
\begin{equation}
 n\eps_{n,M}
 =\sum_{d\mid n}\psi_M(d)(-1)^{n-n/d}.
 \label{eq:signed-psi-sum}
\end{equation}
The $d=1$ term on the right is $\psi_M(1)=1$, so the corresponding sum over $d>1$ equals $n\eps_{n,M}-1$. Inserting the Bernstein decomposition for $\lambda'$ and again using $\eta_d=0$ for $d>r$ gives
\begin{equation}
 n[s_\lambda]\F_{n,M}
 =f^\lambda+\delta_\mu(-1)^r(n\eps_{n,M}-1)
 +\sum_{\substack{d\mid n\\2\le d\le r}}
 (-1)^{n-n/d}\psi_M(d)\eta_d.
 \label{eq:majority-column-coefficient}
\end{equation}
The last sum again has absolute value less than $2f^\lambda/3$.

\smallskip\noindent\emph{Positivity away from the four endpoints.}
If $\delta_\mu=0$, either formula is therefore greater than $f^\lambda/3$. If $\delta_\mu=1$, then the relevant majority shape is a hook.  In the row case it is $(n-r,1^r)$; in the column case its conjugate has this shape, and conjugation preserves dimension.  The exterior-power realization therefore gives in either case
\[
 f^\lambda=f^{(n-r,1^r)}=\dim\bigwedge^rV=\binom{n-1}{r}.
\]
The constant in the row formula has absolute value at most $1$, and that in the column formula has absolute value at most $n-1$. Since a non-elementary cutoff has $n\ge12$ and $2\le r\le(n-1)/2$, the binomial coefficients are increasing from $r=2$ up to the middle: indeed,
\[
 \frac{\binom{n-1}{r+1}}{\binom{n-1}{r}}
 =\frac{n-1-r}{r+1}\ge1
 \qquad\left(r<\frac{n-2}{2}\right).
\]
Consequently $f^\lambda=\binom{n-1}{r}\ge\binom{n-1}{2}$. Hence
\[
 \frac13f^\lambda-(n-1)
 \ge(n-1)\left(\frac{n-2}{6}-1\right)>0.
\]
Thus every majority coefficient with $r\ge2$ is strictly positive.

\smallskip\noindent\emph{The four endpoint shapes.}
When $r=0$ or $1$, the bound $\eta_d=0$ for $d>r$ leaves no remainder at all. For $r=0$, equation~\eqref{eq:majority-row-coefficient} reads $n[s_{(n)}]\F_{n,M}=1-1=0$, while \eqref{eq:majority-column-coefficient} reads $n[s_{(1^n)}]\F_{n,M}=n\eps_{n,M}$. For $r=1$, the hook dimension is $n-1$ and $(-1)^r=-1$, so the same two formulas give
\[
 [s_{(n-1,1)}]\F_{n,M}=1,
 \qquad
 [s_{(2,1^{n-2})}]\F_{n,M}=1-\eps_{n,M}.
\]
Thus the four endpoint coefficients are
\begin{align*}
 [s_{(n)}]\F_{n,M}&=0,
 & [s_{(n-1,1)}]\F_{n,M}&=1,\\
 [s_{(1^n)}]\F_{n,M}&=\eps_{n,M},
 & [s_{(2,1^{n-2})}]\F_{n,M}&=1-\eps_{n,M}.
\end{align*}
They are nonnegative by Lemma~\ref{lem:hook-constants}.
\end{proof}

\section{Balanced diagrams}
\label{sec:balanced}

Assume now that neither a row nor a column has a majority:
\begin{equation}
 \lambda_1\le\frac n2,
 \qquad
 \lambda_1'\le\frac n2.
 \label{eq:balanced-condition}
\end{equation}
The dimension is then automatically of central-binomial size. Unlike the majority case, there is no hook constant to analyze: a balanced diagram cannot itself be a hook. We first establish the dimension bound, and then use it to make the identity term dominate every nonidentity rectangular class simultaneously.

\subsection{A Catalan-scale dimension bound}

Following Swanson, define the opposite hook length of a box $(i,j)$ by $i+j-1$, and put
\[
 N(\lambda)=n-\max_{(i,j)\in\lambda}(i+j-1),
\]
\[
 N^*(\lambda)=
 \begin{cases}
 N(\lambda),&2N(\lambda)+1\le n,\\[2pt]
 \left\lfloor\dfrac{n-1}{2}\right\rfloor,&2N(\lambda)+1>n.
 \end{cases}
\]
Swanson's estimate states that
\begin{equation}
 f^\lambda\ge\frac1{m+1}\binom nm
 \qquad(0\le m\le N^*(\lambda));
 \label{eq:Swanson-opposite}
\end{equation}
see \cite[Corollary 4.13]{Swanson}.

Put
\begin{equation}
 m_n=\left\lfloor\frac{n-1}{2}\right\rfloor,
 \qquad
 B_n=\frac1{m_n+1}\binom n{m_n}.
 \label{eq:Bn}
\end{equation}

\begin{lemma}[Balanced dimension bound]
Every partition satisfying \eqref{eq:balanced-condition} has
\begin{equation}
 f^\lambda\ge B_n.
 \label{eq:balanced-dimension}
\end{equation}
\label{lem:balanced-dimension}
\end{lemma}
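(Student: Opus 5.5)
The plan is to apply Swanson's estimate \eqref{eq:Swanson-opposite} with $m=m_n$. This requires showing that $m_n\le N^*(\lambda)$ for every balanced $\lambda$. By the definition of $N^*$, the second case gives $N^*(\lambda)=m_n$ outright. So it suffices to treat the case $2N(\lambda)+1\le n$, where we need $N(\lambda)\ge m_n=\lfloor (n-1)/2\rfloor$.

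The first step is to compute the maximal opposite hook length. Since $\lambda$ is a partition, $i+j-1$ is maximized over boxes on the outer corners of the diagram. It is always at most $\lambda_1+\lambda_1'-1$, because a box $(i,j)$ has $i\le\lambda_1'$ and $j\le\lambda_1$. Hence
\[
 N(\lambda)\ge n-\lambda_1-\lambda_1'+1\ge n-\tfrac n2-\tfrac n2+1=1,
\]
which is too weak. So the crude bound $\lambda_1+\lambda_1'-1$ must be replaced by the actual maximum over boxes. The box $(i,j)$ lies in $\lambda$ only if $\lambda_i\ge j$, and then the boxes of the hook of $(1,1)$ together with the boxes of the row $i$ and column $j$ beyond the first ones already force more cells. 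The cleaner route is the following. For a box $(i,j)\in\lambda$, the rectangle $[1,i]\times[1,j]$ lies in $\lambda$, so $n\ge ij$ and also $n\ge \lambda_1+\lambda_1'-1$. I would instead count the cells of the union of row $1$, column $1$, row $i$ and column $j$. Disjointness gives at least $(i+j-1)+(i-1)+(j-1)-1$ when $i,j\ge2$, roughly $2(i+j-1)-3$.

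The second step is to finish the inequality. The bound above gives $n\ge 2(i+j-1)-3$ for boxes off the first row and column. Boxes on the first row or column have $i+j-1\le\max(\lambda_1,\lambda_1')\le n/2$. Combining these gives $\max(i+j-1)\le \max\{n/2,(n+3)/2\}$, hence $N(\lambda)\ge (n-3)/2$. This is essentially $m_n$ up to an off-by-one. The sharper count uses the full hook of $(1,1)$, which has $\lambda_1+\lambda_1'-1$ cells, together with the corner path from $(1,j)$ down to $(i,j)$ and across to $(i,1)$. This yields $n\ge 2(i+j-1)-1$ minus overlap corrections, which I would verify carefully. That gives $i+j-1\le (n+1)/2$, so $N(\lambda)\ge (n-1)/2$ and therefore $N(\lambda)\ge m_n$.

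The main obstacle is this exact off-by-one cell count at a corner box. If the count falls one short, I would fall back on monotonicity in $m$. The function $\frac1{m+1}\binom nm$ increases for $m\le m_n$, so $N\ge m_n-1$ alone would not suffice, and the parity cases of $n$ would have to be handled separately, for instance with near-rectangular shapes as extremal examples. Once $m_n\le N^*(\lambda)$ is established, \eqref{eq:Swanson-opposite} gives $f^\lambda\ge B_n$ directly.
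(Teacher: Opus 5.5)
\textbf{There is a genuine gap in your decisive step.} Your overall plan is right and is essentially the paper's route: apply Swanson's estimate with $m=m_n$, so that only $N(\lambda)\ge m_n$ needs proof, and bound $i+j-1$ by counting cells that a box $(i,j)$ forces into $\lambda$. But the count that actually settles the question is never correctly completed.

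First, your perimeter count has an arithmetic slip. The correct value is $(i+j-1)+(i-1)+(j-1)-1=2i+2j-4=2(i+j-1)-2$, not $2(i+j-1)-3$.

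Second, the ``sharper count'' you fall back on is false. The inequality $n\ge 2(i+j-1)-1$, equivalently $i+j-1\le (n+1)/2$ or $N(\lambda)\ge (n-1)/2$, fails for every even $n\ge4$. For example, $\lambda=(3,3)$ is balanced with $n=6$. Its box $(2,3)$ has $i+j-1=4>7/2$, and $N(\lambda)=2<5/2$. No bookkeeping of overlaps can rescue this claim. For even $n$ the value $N(\lambda)=n/2-1$ is attained, for instance by $\lambda=(n/2,n/2)$ at the box $(2,n/2)$.

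The repair is already contained in your first count, once it is done correctly and combined with integrality, which is the ingredient you were missing.
\begin{itemize}
\item For a box with $i,j\ge2$, the boundary of the $i\times j$ rectangle lies in $\lambda$ and has $2i+2j-4$ cells. Hence $i+j-1\le n/2+1$.
\item For boxes in the first row or column you correctly get $i+j-1\le\max(\lambda_1,\lambda_1')\le n/2$.
\item Therefore $N(\lambda)\ge n/2-1$. Since $N(\lambda)$ is an integer, $N(\lambda)\ge\lceil n/2\rceil-1=\lfloor (n-1)/2\rfloor=m_n$.
\end{itemize}
For odd $n$, rounding up the half-integer $n/2-1$ gives exactly $m_n$. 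For even $n$, the bound $n/2-1=m_n$ is already sharp, so no improvement is possible or needed. This is precisely the paper's argument. There the perimeter count appears as $ij\le n$ together with $(i-2)(j-2)\ge0$, giving $i+j\le n/2+2$, followed by a split according to the parity of $n$.
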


\begin{proof}
A box $(i,j)\in\lambda$ forces an $i\times j$ rectangle inside the diagram, so $ij\le n$. Also $i,j\le n/2$ by \eqref{eq:balanced-condition}. If $i,j\ge2$, then
\[
 (i-2)(j-2)\ge0
 \quad\Longrightarrow\quad
 i+j\le\frac{ij}{2}+2\le\frac n2+2.
\]
If $i=1$, then $i+j\le1+\lambda_1\le1+n/2<n/2+2$, and the case $j=1$ is identical using $\lambda_1'$. If $n=2a$, the preceding estimate gives $i+j-1\le a+1=n-m_n$. If $n=2a+1$, integrality gives $i+j\le a+2$, and again $i+j-1\le a+1=n-m_n$. Thus in every case
\[
 i+j-1\le n-m_n.
\]
Hence $N(\lambda)\ge m_n$. If $2N(\lambda)+1\le n$, then $N^*(\lambda)=N(\lambda)\ge m_n$; otherwise $N^*(\lambda)=m_n$. Thus $m=m_n$ is admissible in \eqref{eq:Swanson-opposite}, proving \eqref{eq:balanced-dimension}.
\end{proof}

\begin{lemma}[Growth of the balanced lower bound]
For $n\ge18$,
\begin{equation}
 \frac{B_{n+1}}{B_n}\ge\frac95.
 \label{eq:Bn-growth}
\end{equation}
\label{lem:Bn-growth}
\end{lemma}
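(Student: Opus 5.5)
We need $B_{n+1}/B_n\ge 9/5$ for $n\ge18$. The plan is to split according to the parity of $n$, since $m_n=\lfloor (n-1)/2\rfloor$ behaves differently in the two cases, compute the ratio exactly as a rational function of $n$, and then verify the inequality. The inequality becomes easier as $n$ grows.

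\emph{Case $n=2a$ even.} Here $m_n=a-1$ and $m_{n+1}=a$. The ratio is
\[
\frac{B_{2a+1}}{B_{2a}}=\frac{a}{a+1}\cdot\frac{\binom{2a+1}{a}}{\binom{2a}{a-1}}.
\]
Using $\binom{2a+1}{a}=\frac{2a+1}{a+1}\binom{2a}{a}$ and $\binom{2a}{a}=\frac{a+1}{a}\binom{2a}{a-1}$, the binomial quotient is $\frac{2a+1}{a}$. Hence the ratio equals $\frac{2a+1}{a+1}=2-\frac1{a+1}$. For $a\ge9$ this is at least $2-\frac1{10}=\frac{19}{10}\ge\frac95$.

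\emph{Case $n=2a+1$ odd.} Here $m_n=a$ and $m_{n+1}=a$, so the prefactors $\frac1{a+1}$ cancel. The ratio is
\[
\frac{\binom{2a+2}{a}}{\binom{2a+1}{a}}=\frac{2a+2}{a+2}=2-\frac{2}{a+2}.
\]
For $n\ge19$ we have $a\ge9$, so the ratio is at least $2-\frac{2}{11}=\frac{20}{11}>\frac95$.

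In both cases the ratio is an explicit increasing function of $a$, so checking the smallest admissible $a$ suffices. No real obstacle is expected. The only care needed is tracking $m_n$ correctly across the parity switch, namely that $m_{n+1}=m_n+1$ when $n$ is even and $m_{n+1}=m_n$ when $n$ is odd, and confirming that the binomial identities are applied with the right indices. As a sanity check, $n=18$ gives $B_{19}/B_{18}=\frac{19}{10}$, consistent with the even-case formula.
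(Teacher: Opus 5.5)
Your proof is correct and follows the paper's argument: splitting by parity gives the same closed forms $B_{2a+1}/B_{2a}=(2a+1)/(a+1)$ and $B_{2a+2}/B_{2a+1}=2(a+1)/(a+2)$, and both are at least $9/5$ once $a\ge 9$. Your explicit endpoint values $19/10$ and $20/11$ at $a=9$ make the final comparison slightly more transparent than the paper's one-line conclusion.
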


\begin{proof}
Writing $n=2a$ or $2a+1$ in \eqref{eq:Bn} gives
\[
 \frac{B_{2a+1}}{B_{2a}}=\frac{2a+1}{a+1},
 \qquad
 \frac{B_{2a+2}}{B_{2a+1}}=\frac{2(a+1)}{a+2}.
\]
For $a\ge9$, both ratios are at least $9/5$.
\end{proof}

\subsection{A global error majorant}

Put
\begin{equation}
 X_n:=\frac{B_n}{\sqrt{2\pi n}},
 \label{eq:balanced-X}
\end{equation}
and, for $\kappa>0$ and $n\ge18$, define
\begin{equation}
 \mathcal G_n(\kappa):=e^{1/12}\left(
 \frac{X_n^{-1/2}}{\sqrt2}
 +3\kappa n^{3/4}X_n^{-2/3}\right).
 \label{eq:Gna}
\end{equation}
The parameter $\kappa$ records whether the available arithmetic estimate is $|\psi_M(d)|\le d$ or $|\psi_M(d)|\le2d$.

\begin{lemma}[Monotonicity of the balanced majorant]
For fixed $\kappa>0$, the sequence $\mathcal G_n(\kappa)$ is strictly decreasing for $n\ge18$.
\label{lem:G-monotonicity}
\end{lemma}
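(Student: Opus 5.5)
The plan is to show that each of the two summands in $\mathcal G_n(\kappa)$ is strictly decreasing for $n\ge18$. Since $\kappa>0$ and $e^{1/12}$ are fixed positive constants, this suffices. Both summands are negative powers of $X_n$, possibly multiplied by a slowly growing factor, so everything reduces to a lower bound on the growth ratio $X_{n+1}/X_n$.

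\textbf{Step 1: growth of $X_n$.} From \eqref{eq:balanced-X},
\[
 \frac{X_{n+1}}{X_n}=\frac{B_{n+1}}{B_n}\sqrt{\frac{n}{n+1}}.
\]
Lemma~\ref{lem:Bn-growth} gives $B_{n+1}/B_n\ge9/5$ for $n\ge18$. Also $\sqrt{n/(n+1)}\ge\sqrt{18/19}$ on this range. Hence
\[
 \frac{X_{n+1}}{X_n}\ge\frac95\sqrt{\frac{18}{19}}>\frac{17}{10}>1 .
\]
In particular $X_n$ is positive and strictly increasing.

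\textbf{Step 2: the first summand.} The term $X_n^{-1/2}/\sqrt2$ is a negative power of a strictly increasing positive sequence. It therefore strictly decreases.

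\textbf{Step 3: the second summand.} This is the only step needing care, because $n^{3/4}$ increases. The ratio of consecutive terms is
\[
 \Bigl(\frac{n+1}{n}\Bigr)^{3/4}\Bigl(\frac{X_{n+1}}{X_n}\Bigr)^{-2/3}.
\]
For $n\ge18$ the first factor is at most $(19/18)^{3/4}<1.05$. By Step~1 the second factor is at most $(17/10)^{-2/3}$. Since $(17/10)^{2/3}>1.4$ (because $1.4^{3/2}<1.66<1.7$), the ratio is below $1.05/1.4<1$. So $3\kappa n^{3/4}X_n^{-2/3}$ strictly decreases.

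Adding the two strictly decreasing positive sequences and multiplying by $e^{1/12}$ gives the lemma. The main obstacle is only Step~3, the competition between $n^{3/4}$ and $X_n^{-2/3}$. The geometric growth rate $9/5$ from Lemma~\ref{lem:Bn-growth} overwhelms the polynomial factor with plenty of room, so only crude numerical bounds are needed.
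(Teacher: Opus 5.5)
Your proof is correct and follows the paper's argument. Both use Lemma~\ref{lem:Bn-growth} to get $X_{n+1}/X_n\ge\frac95\sqrt{n/(n+1)}>1$, which handles the first summand, and then a consecutive-ratio comparison for $n^{3/4}X_n^{-2/3}$. The only difference is cosmetic: you freeze $n=18$ to get the uniform bound $X_{n+1}/X_n>17/10$, whereas the paper keeps $n$ and bounds $(5/9)^{2/3}(1+1/n)^{13/12}<\frac34\cdot\frac43=1$.
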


\begin{proof}
Lemma~\ref{lem:Bn-growth} gives
\[
 \frac{X_{n+1}}{X_n}\ge\frac95\sqrt{\frac n{n+1}}>1,
\]
where the last inequality follows after squaring from $81n>25(n+1)$.  Thus the first term decreases. For the second term, Lemma~\ref{lem:Bn-growth} gives
\begin{align*}
 \frac{(n+1)^{3/4}X_{n+1}^{-2/3}}
      {n^{3/4}X_n^{-2/3}}
 &\le
 \left(1+\frac1n\right)^{3/4}
 \left(\frac59\sqrt{1+\frac1n}\right)^{2/3}\\
 &=\left(\frac59\right)^{2/3}
 \left(1+\frac1n\right)^{13/12}<1.
\end{align*}
Indeed, $(5/9)^{2/3}<3/4$ because $(5/9)^2<(3/4)^3$, and, for $n\ge18$,
$(1+1/n)^{13/12}<(19/18)^2<4/3$; the last comparison is simply $361<432$.
\end{proof}

The proof also shows that $X_n$ is increasing for $n\ge18$. At the left endpoint,
\[
 X_{18}=\frac{2431}{3\sqrt\pi}>400,
\]
where we used $\pi<4$. In particular, $X_n>1$ throughout the range in which $\mathcal G_n(\kappa)$ is used.

\begin{proposition}[Balanced positivity]
Let $M<n$, suppose that $D_{n,M}$ is not elementary, and let $\lambda\vdash n$ satisfy \eqref{eq:balanced-condition}. Then
\[
 [s_\lambda]\F_{n,M}>0.
\]
\label{prop:balanced}
\end{proposition}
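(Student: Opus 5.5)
Starting from \eqref{eq:central-character}, we divide by $f^\lambda$; it then suffices to show
\[
 \sum_{\substack{d\mid n\\d>1}}|\psi_M(d)|\,\frac{|\chi^\lambda(d^{n/d})|}{f^\lambda}<1 .
\]
Lemma~\ref{lem:arithmetic-left-endpoints} gives $n\ge12$, and $n\ge18$ unless $n=12$. Lemma~\ref{lem:balanced-dimension} gives $f^\lambda\ge B_n$, so $\sqrt{2\pi n}/f^\lambda\le X_n^{-1}$. Since $X_n>1$ for $n\ge18$, the exponent $1-1/d$ in Lemma~\ref{lem:normalized-FL} works in our favour: larger $d$ give smaller powers of $X_n^{-1}$. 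We separate $d=2$ from $d\ge3$.

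\emph{The term $d=2$.} Here $|\psi_M(2)|\le1$ by Lemma~\ref{lem:psi-bounds}, and \eqref{eq:normalized-FL} gives the bound $2^{-1/2}e^{1/(6n)}X_n^{-1/2}\le e^{1/12}X_n^{-1/2}/\sqrt2$. This is the first summand of $\mathcal G_n(\kappa)$.

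\emph{The terms $d\ge3$.} We use $|\psi_M(d)|\le\kappa d$, with $\kappa=2$ in general by \eqref{eq:psi-global} and $\kappa=1$ when $\wpr(n)\le2$ by \eqref{eq:psi-two-prime}. Since $1-1/d\ge2/3$ and $X_n>1$, each term is at most
\[
 \kappa d\cdot d^{-1/2}e^{d/(12n)}X_n^{-2/3}\le \kappa e^{1/12}\sqrt d\,X_n^{-2/3},
\]
using $d\le n$. Summing over divisors with Lemma~\ref{lem:divisor-sum} gives at most $3\kappa e^{1/12}n^{3/4}X_n^{-2/3}$. The total error is therefore at most $\mathcal G_n(\kappa)$.

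By Lemma~\ref{lem:G-monotonicity}, it is enough to check $\mathcal G_n(\kappa)<1$ at the left endpoints: $n=18$ with $\kappa=1$, and $n=30$ with $\kappa=2$ (since $\wpr(n)\ge3$ forces $n\ge30$). The case $n=12$, where $X_{18}$-type bounds are unavailable, is handled separately below.

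\emph{Main obstacle: the numerical check.} With $X_{18}>400$, the first term is about $e^{1/12}/(\sqrt2\cdot20)\approx0.038$. The second is $3\cdot18^{3/4}\cdot400^{-2/3}\cdot e^{1/12}\approx3\cdot8.8/54.3\cdot1.09\approx0.53$. So $\mathcal G_{18}(1)<1$, and I would certify this with rational bounds on $X_{18}$ as in the displayed computation $X_{18}=2431/(3\sqrt\pi)$.

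For $n=30$ we have $B_{30}=\binom{30}{14}/15$. This is roughly $9.7\cdot10^6$, so $X_{30}\approx7\cdot10^5$ and $X_{30}^{-2/3}\approx1.3\cdot10^{-4}$. The term $6n^{3/4}X^{-2/3}$ is then tiny, so $\mathcal G_{30}(2)<1$ comfortably.

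The genuinely delicate case is $n=12$, which sits outside the range of Lemma~\ref{lem:G-monotonicity}. Here $B_{12}=\binom{12}{5}/6=132$, so $X_{12}\approx15$. Then $\mathcal G_{12}(1)$ has second term $3\cdot6.45\cdot15^{-2/3}\approx3.2$, and the generic majorant fails. I would instead bound the divisors $d\in\{2,3,4,6,12\}$ individually.

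For $n=12$ the only non-elementary cutoff is $M\in\{3,4,5\}$, with $D=\{1,2,3,4\}$ or $\{1,2,3\}$, so the weights $\psi_M(d)$ are explicit small integers. For the balanced shapes of $12$, I would use Fomin--Lulov term by term with the exact $f^\lambda\ge132$. If that still falls short, the fallback is the Murnaghan--Nakayama rule: rectangular character values of balanced shapes of $12$ on $2^6$, $3^4$, $4^3$, $6^2$, $12$ are small (bounded by $\binom{6}{3}$-type counts of domino/ribbon tableaux). I would bound them by the number of ribbon tableaux, via the $d$-quotient, which is at most $\binom{n/d}{\ldots}f^{\text{quotient}}$, and compare with $f^\lambda\ge132$.
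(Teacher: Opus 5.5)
Your proposal is correct and follows the paper's proof essentially step for step: you use the same Fomin--Lulov/Swanson majorant $\mathcal G_n(\kappa)$, checked at the endpoints $n=18$ with $\kappa=1$ and $n=30$ with $\kappa=2$, and a separate term-by-term estimate at $n=12$ with the explicit weights $|\psi_M(d)|\le 1,2,2,4$ for $d=2,3,4,6$. The $n=12$ check you left open does close without any Murnaghan--Nakayama fallback: the paper discards $d=12$ (a balanced shape is not a hook, so $\chi^\lambda(12)=0$) and obtains $e^{1/12}\bigl(1/\sqrt{30}+(2/\sqrt3+1+4/\sqrt6)\,15^{-2/3}\bigr)<\tfrac{11}{10}\bigl(\tfrac15+\tfrac23\bigr)<1$, and even if you keep $d=12$ with weight $2$ and use the exact exponents $1-1/d$ as you propose, the total is only about $0.75$.
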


\begin{proof}
\smallskip\noindent\emph{Removing the longest cycle.}
A hook $\lambda=(a,1^b)$ satisfies $\lambda_1+\lambda_1'=a+(b+1)=n+1$, so one of its first row and first column has length greater than $n/2$. A balanced diagram is therefore not a hook. As explained in the proof of Proposition~\ref{prop:prefixes}, the Murnaghan--Nakayama rule then gives $\chi^\lambda(n)=0$, which removes the term $d=n$ from \eqref{eq:central-character}.

\smallskip\noindent\emph{A common error bound.}
For every remaining divisor $2\le d<n$, Lemma~\ref{lem:normalized-FL} gives
\[
 \frac{|\chi^\lambda(d^{n/d})|}{f^\lambda}
 \le \frac1{\sqrt d}e^{d/(12n)}
 \left(\frac{\sqrt{2\pi n}}{f^\lambda}\right)^{1-1/d}.
\]
Since $f^\lambda\ge B_n$ by Lemma~\ref{lem:balanced-dimension} and $X_n=B_n/\sqrt{2\pi n}$, the last factor is at most $X_n^{-(1-1/d)}$. Multiplying by $|\psi_M(d)|$ and summing gives the intrinsic error bound
\begin{equation}
 \frac1{f^\lambda}
 \sum_{\substack{d\mid n\\d>1}}
 |\psi_M(d)\chi^\lambda(d^{n/d})|
 \le
 \sum_{\substack{d\mid n\\2\le d<n}}
 \frac{|\psi_M(d)|}{\sqrt d}
 e^{d/(12n)}X_n^{-(1-1/d)}.
 \label{eq:balanced-error-master}
\end{equation}
For later use, suppose $n\ge18$ and that $|\psi_M(d)|\le\kappa d$ for every divisor $d\ge3$, while $|\psi_M(2)|\le1$. Since $e^{d/(12n)}\le e^{1/12}$ and $1-1/d\ge2/3$ for $d\ge3$, equation~\eqref{eq:balanced-error-master} and Lemma~\ref{lem:divisor-sum} give
\begin{align}
 \frac1{f^\lambda}
 \sum_{\substack{d\mid n\\d>1}}
 |\psi_M(d)\chi^\lambda(d^{n/d})|
 &\le e^{1/12}\left(
 \frac{X_n^{-1/2}}{\sqrt2}
 +\kappa X_n^{-2/3}
   \sum_{\substack{d\mid n\\d\ge3}}\sqrt d\right)\notag\\
 &\le \mathcal G_n(\kappa).
 \label{eq:G-majorant}
\end{align}
Because $X_n>1$ in this range, replacing the exponent $1-1/d$ by the smaller exponent $2/3$ indeed enlarges $X_n^{-(1-1/d)}$.

\smallskip\noindent\emph{The three arithmetic strata.}
The degrees $12$, $18$, and $30$ below are not thresholds chosen for the estimates: Lemma~\ref{lem:arithmetic-left-endpoints} shows that they are the forced left endpoints of the relevant arithmetic strata. We use only the elementary inequalities $\pi<22/7<4$ and
\begin{equation}
 e^{1/12}<\frac{11}{10},
 \label{eq:exp-elementary}
\end{equation}

\smallskip\noindent\emph{The initial degree $n=12$.}
First suppose $\wpr(n)\le2$. If $n=12$, then $B_{12}=132$ and $X_{12}>15$, since $X_{12}^2=726/\pi>231$. The only nontrivial proper divisors are $2,3,4,6$, and Lemma~\ref{lem:psi-bounds} bounds their weights by $1,2,2,4$. For $d=2$ we retain the exponent $1/2$ in \eqref{eq:balanced-error-master}; for $d=3,4,6$ we use $1-1/d\ge2/3$. Since $X_{12}>15$, this gives
\begin{align}
 \frac1{f^\lambda}
 \sum_{\substack{d\mid12\\d>1}}
 |\psi_M(d)\chi^\lambda(d^{12/d})|
 &<e^{1/12}\left(
 \frac1{\sqrt{30}}
 +\left(\frac2{\sqrt3}+1+\frac4{\sqrt6}\right)15^{-2/3}
 \right)\notag\\
 &<\frac{11}{10}\left(\frac15+\frac23\right)
 =\frac{143}{150}<1.
 \label{eq:n12-contraction}
\end{align}
Indeed, $1/\sqrt{30}<1/5$ and $15^{2/3}>6$, while
\[
 \frac2{\sqrt3}+1+\frac4{\sqrt6}
 <\frac43+1+\frac53=4.
\]

\smallskip\noindent\emph{At most two prime divisors, $n\ge18$.}
Now assume $\wpr(n)\le2$ and $n\ge18$. Every divisor of $n$ has at most two distinct prime factors, so Lemma~\ref{lem:psi-bounds} gives $|\psi_M(d)|\le d$, while $|\psi_M(2)|\le1$. Thus \eqref{eq:G-majorant} applies with $\kappa=1$. At $n=18$,
\[
 B_{18}=4862,
 \qquad
 X_{18}=\frac{2431}{3\sqrt\pi}>400.
\]
Since $18^{3/4}<9$ and $400^{2/3}>54$, the first term in \eqref{eq:Gna} is less than $1/20$, while the second is less than $3\cdot9/54=1/2$. Hence
\begin{equation}
 \mathcal G_{18}(1)
 <\frac{11}{10}\left(\frac1{20}+\frac12\right)
 =\frac{121}{200}<1.
 \label{eq:G18}
\end{equation}
Lemma~\ref{lem:G-monotonicity} proves the claim throughout this arithmetic stratum.

\smallskip\noindent\emph{At least three prime divisors.}
Finally suppose $\wpr(n)\ge3$. Then $n\ge30$, and Lemma~\ref{lem:psi-bounds} gives $|\psi_M(d)|\le2d$. Thus \eqref{eq:G-majorant} applies with $\kappa=2$. Now
\[
 B_{30}=9694845,
 \qquad
 X_{30}=\frac{9694845}{\sqrt{60\pi}}>2^{19};
\]
here $\sqrt{60\pi}<16$ and $9694845>16\cdot2^{19}$. Also $30^{3/4}<13$, and $2^{38/3}=2^{12}2^{2/3}>6144$. The first term in \eqref{eq:Gna} is therefore less than $2^{-10}$, and the second is less than
\[
 6\cdot13\,2^{-38/3}<\frac{78}{6144}=\frac{13}{1024}.
\]
Consequently,
\[
 \mathcal G_{30}(2)
 <\frac{11}{10}\left(2^{-10}+\frac{13}{1024}\right)
 =\frac{77}{5120}<1.
\]
Lemma~\ref{lem:G-monotonicity} handles every larger $n$.

Thus the nonidentity terms in \eqref{eq:central-character} have total absolute value strictly smaller than $f^\lambda$. The identity term therefore dominates, and $[s_\lambda]\F_{n,M}>0$.
\end{proof}

\section{Completion of the proof}
\label{sec:completion}

\begin{proof}[Proof of Theorem~\ref{thm:main}]
If $M\ge n$, then $\F_{n,M}=\ell_n^{(n)}$ is a Foulkes character. Assume $M<n$. If the divisor prefix $D_{n,M}$ is elementary, Proposition~\ref{prop:prefixes} applies. Otherwise Lemma~\ref{lem:arithmetic-left-endpoints} gives $n\ge12$.

Fix $\lambda\vdash n$. If $\max(\lambda_1,\lambda_1')>n/2$, Proposition~\ref{prop:majority} gives a nonnegative coefficient. If $\lambda_1,\lambda_1'\le n/2$, Proposition~\ref{prop:balanced} gives a positive coefficient. These two intrinsic regions cover every partition, so no residual degree or partition remains to be checked. Integrality was established by the Foulkes decomposition \eqref{eq:theta-decomposition}, so all Schur coefficients are nonnegative integers.
\end{proof}

\begin{remark}
At every non-elementary proper cutoff, the proof gives more than nonnegativity. All balanced coefficients are strictly positive, as are all majority coefficients for which at least two boxes lie outside the majority row or column. The remaining four coefficients are
\[
 0,\quad1,\quad\eps_{n,M},\quad1-\eps_{n,M}
\]
for the shapes $(n)$, $(n-1,1)$, $(1^n)$, and $(2,1^{n-2})$, respectively.
\end{remark}

\begin{corollary}
For every $M\ge1$, the element
\[
 \prod_{r=1}^{M}(1-p_r)^{-1}
 =\sum_{\rho:\,\rho_1\le M}p_\rho
\]
is Schur-positive in $\widehat\Lambda_{\mathbb Q}$.
\end{corollary}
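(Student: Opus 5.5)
The corollary follows from Theorem~\ref{thm:main} together with Sundaram's plethystic identity \eqref{eq:Sundaram-identity}. No further character estimates are needed.

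Fix $M\ge1$ and put $G=\sum_{n\ge1}\F_{n,M}\in\widehat\Lambda_{\mathbb Q}$. The proof has four steps.

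\begin{enumerate}
\item \emph{Positivity of the input.} Each homogeneous component $\F_{n,M}$ is Schur-positive by Theorem~\ref{thm:main}. For $M\ge n$ it is the Foulkes character $\ell_n^{(n)}$; for $M<n$ this is the content of the theorem. The series $G$ has no constant term, so $H[G]=\sum_{j\ge0}h_j[G]$ is well defined degree by degree, as remarked in Section~\ref{sec:symmetric}.

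\item \emph{Reduction to finitely many terms in each degree.} Fix a degree $N$. Only the components $\F_{n,M}$ with $n\le N$ and only the indices $j\le N$ contribute to the degree-$N$ part of $H[G]$. That part is therefore the degree-$N$ part of $\sum_{j\le N}h_j[G_{\le N}]$, where $G_{\le N}=\sum_{n\le N}\F_{n,M}$ is a finite nonnegative integral combination of Schur functions.

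\item \emph{Plethysm positivity.} The plethysm-positivity fact (\cite[Theorem~A2.7]{Stanley}) applies to each $h_j[G_{\le N}]$, since both $h_j$ and $G_{\le N}$ are finite nonnegative integral combinations of Schur functions. Each $h_j[G_{\le N}]$ is therefore such a combination. Homogeneous components of a Schur-positive function are Schur-positive, so the degree-$N$ part of $H[G]$ is Schur-positive. This is exactly the completed-series implication of \cite[Corollary~3.1]{Sundaram2019b}.

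\item \emph{Identification of the series.} By \eqref{eq:Sundaram-identity}, equivalently \eqref{eq:Sundaram-general} with $T=[M]$ and \eqref{eq:power-formula} identifying $f_n^{[M]}=\F_{n,M}$, we have $H[G]=P_M$. By \eqref{eq:PM}, $P_M=\prod_{r\le M}(1-p_r)^{-1}=\sum_{\rho_1\le M}p_\rho$.
\end{enumerate}

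The only point needing care is step 2: the infinite plethysm must reduce to finitely many terms in each degree. This holds because $G$ has no constant term. Everything else is a direct citation of results already stated, so I expect no real obstacle.
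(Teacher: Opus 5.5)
Your proposal is correct and follows the paper's proof. The paper combines Theorem~\ref{thm:main}, the degree-by-degree plethysm positivity (Sundaram's completed-series implication \cite[Corollary~3.1]{Sundaram2019b}), and the identity \eqref{eq:Sundaram-general} with $T=[M]$; your truncation to $G_{\le N}$ and $j\le N$ just writes out the ``applied degree by degree'' remark in Section~\ref{sec:symmetric} that the paper handles by citation.
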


\begin{proof}
By Theorem~\ref{thm:main}, each homogeneous input $f_n^{[M]}=\F_{n,M}$ is Schur-positive. Sundaram's completed-series implication \cite[Corollary~3.1]{Sundaram2019b}, applied with $T=[M]$, now shows that $H[\sum_{n\ge1}f_n^{[M]}]$ is Schur-positive. Equation~\eqref{eq:Sundaram-general} identifies this series with the displayed product.
\end{proof}

\section{A separate congruence-class conjecture}
\label{sec:counterexample}

The bounded-interval family studied above is different from a second family considered by Sundaram. Conjecture 6 of \cite{Sundaram2019b}, repeated as Conjecture 5 of \cite{Sundaram2021}, attributes the following statement to Richard Stanley (2015):
\[
 G_k:=\prod_{\substack{m\ge1\\m\equiv1\pmod k}}(1-p_m)^{-1}
\]
is Schur-positive for every $k\ge1$.

Both papers say that it had been checked through degree $24$ for $k\le6$. As printed, however, the statement already fails for $k=3$ in degree $12$. The allowed parts are $1,4,7,10$, and therefore
\[
 (G_3)_{12}
 =p_1^{12}+p_4p_1^8+p_4^2p_1^4+p_4^3
  +p_7p_1^5+p_7p_4p_1+p_{10}p_1^2.
\]
The coefficient of $s_{(1^{12})}$ in $p_\rho$ is the sign $(-1)^{12-\ell(\rho)}$. The seven terms contribute
\[
 1,-1,1,-1,1,-1,-1,
\]
so
\[
 [s_{(1^{12})}](G_3)_{12}=-1.
\]
Thus the displayed product conjecture is false. Since the example lies inside the range reported as checked, this discrepancy suggests that the reported computation used a different formulation or convention, or that the printed statement contains an oversight. In any case, the displayed product has the negative coefficient above.

A nearby interpretation also fails. If $T=\{m:m\equiv1\pmod3\}$ and the intended assertion were that every $f_n^T$ is Schur-positive, then
\[
 f_4^T=\frac14(p_1^4-p_2^2+4p_4),
 \qquad
 [s_{(1^4)}]f_4^T=\frac14(1-1-4)=-1.
\]
These counterexamples do not affect Theorem~\ref{thm:main}: the set $[M]$ is a bounded interval, whereas a congruence class is an unbounded and different set of allowed parts.


\begin{thebibliography}{17}

\bibitem{FominLulov}
S. Fomin and N. Lulov, \emph{On the number of rim hook tableaux}, Zap. Nauchn. Sem. S.-Peterburg. Otdel. Mat. Inst. Steklov. (POMI) 223 (1995), 219--226, 340.

\bibitem{Foulkes}
H. O. Foulkes, \emph{Characters of symmetric groups induced by characters of cyclic subgroups}, Combinatorics, Proc. Conf. Combinatorial Math., Math. Inst., Oxford, 1972, pp. 141--154.

\bibitem{Jollenbeck}
A. J\"ollenbeck, \emph{Cyclic characters of symmetric groups}, J. Algebraic Combin. 12 (2000), 155--161.

\bibitem{Klyachko}
A. A. Klyachko, \emph{Lie elements in the tensor algebra}, Siberian Math. J. 15 (1974), no. 6, 914--920.

\bibitem{Macdonald}
I. G. Macdonald, \emph{Symmetric functions and Hall polynomials}, 2nd ed., Oxford University Press, 1995.

\bibitem{Reutenauer}
C. Reutenauer, \emph{Free Lie algebras}, London Mathematical Society Monographs, New Series, vol. 7, Oxford University Press, 1993.

\bibitem{Robbins}
H. Robbins, \emph{A remark on Stirling's formula}, Amer. Math. Monthly 62 (1955), no. 1, 26--29.

\bibitem{Sagan}
B. E. Sagan, \emph{The symmetric group: representations, combinatorial algorithms, and symmetric functions}, 2nd ed., Graduate Texts in Mathematics, vol. 203, Springer, 2001.

\bibitem{Stanley}
R. P. Stanley, \emph{Enumerative combinatorics, Vol. 2}, Cambridge Studies in Advanced Mathematics, vol. 62, Cambridge University Press, 1999.

\bibitem{Sundaram2018}
S. Sundaram, \emph{The conjugacy action of $S_n$ and modules induced from centralisers}, J. Algebraic Combin. 48 (2018), no. 2, 179--225.

\bibitem{Sundaram2019a}
S. Sundaram, \emph{On a positivity conjecture in the character table of $S_n$}, Electron. J. Combin. 26 (2019), no. 1, Paper No. 1.1, 43 pp.

\bibitem{Sundaram2019b}
S. Sundaram, \emph{On the Schur positivity of sums of power sums}, S\'em. Lothar. Combin. 82B (2019), Art. 49, 12 pp.

\bibitem{Sundaram2020}
S. Sundaram, \emph{On a curious variant of the $S_n$-module $\Lie_n$}, Algebr. Comb. 3 (2020), no. 4, 985--1009.

\bibitem{Sundaram2021}
S. Sundaram, \emph{Prime power variations of higher $\Lie_n$ modules}, J. Combin. Theory Ser. A 184 (2021), 105512.

\bibitem{Swanson}
J. P. Swanson, \emph{On the existence of tableaux with given modular major index}, Algebraic Combinatorics 1 (2018), no. 1, 3--21.

\bibitem{TaoMobius}
T. Tao, \emph{A remark on partial sums involving the M\"obius function}, Bull. Aust. Math. Soc. 81 (2010), no. 2, 343--349.

\bibitem{Thrall}
R. M. Thrall, \emph{On symmetrized Kronecker powers and the structure of the free Lie ring}, Amer. J. Math. 64 (1942), no. 1, 371--388.

\end{thebibliography}
\end{document}